\def\barB{\overline{B}}
\def\d{\displaystyle}
\def\GP{\mathcal{GP}}
\def\A{\mathcal{A}}
\def\H{\mathcal{H}}
\def\M{\mathcal{M}}
\def\P{\mathcal{P}}
\def\Card{{\rm Card}}
\def\C{\mathbb{C}}
\def\D{\mathbb{D}}
\def\N{\mathbb{N}}
\def\R{\mathbb{R}}
\def\T{\mathbb{T}}
\def\acc{{\rm ac}}
\def\Id{\text{Id}}
\def\barD{\overline{\D}}
\def\Ext{\mbox{\rm Ext}}
\newtheorem{theorem}{Theorem}[section]
\newtheorem{lemma}[theorem]{Lemma}
\newtheorem{corollary}[theorem]{Corollary}
\newtheorem{proposition}[theorem]{Proposition}
\newtheorem{remark}[theorem]{Remark}
\newtheorem{example}[theorem]{Example}
\def\Nat{\mathbb{N}}
\author[V. Dimant]{Ver\'onica Dimant}
\address{Departamento de Matem\'{a}tica y Ciencias, Universidad de San
Andr\'{e}s, Vito Dumas 284, (B1644BID) Victoria, Buenos Aires,
Argentina and CONICET} \email{vero@udesa.edu.ar}
\author[S. Lassalle]{Silvia Lassalle}
\address{Departamento de Matem\'{a}tica y Ciencias, Universidad de San
Andr\'{e}s, Vito Dumas 284, (B1644BID) Victoria, Buenos Aires,
Argentina and CONICET} \email{slassalle@udesa.edu.ar}
\author[M. Maestre]{Manuel Maestre}
\address{Departamento de An\'{a}lisis Matem\'{a}tico, Universidad de Valencia, Doctor Moliner 50, 46100 Burjasot,
Valencia, Spain
}
\email{manuel.maestre@uv.es}
\thanks{The first and second authors were partially supported by CONICET PIP 11220200101609CO and ANPCyT PICT 2018-04104 and PAI-UdeSA 2023.  The third author's research was partially supported by grant PID2021-122126NB-C33 funded by MICIU/AEI/10.13039/501100011033 and by ERDF/EU and  the project GV PROMETEU/2021/070.}
\keywords{Algebras of holomorphic functions; Spectrum; Gleason parts; fibers; Banach spaces}
\subjclass{ Primary 46J15; Secondary 46E50, 46G20}
\begin{document}
\begin{abstract} 
In the early nineties, R. M.  Aron, B. Cole, T. Gamelin and W.B. Johnson initiated the study of the maximal ideal space (spectrum) of Banach algebras of holomorphic functions defined on the open unit ball of an infinite dimensional complex Banach space. Within this framework, we investigate the fibers and  Gleason parts of the spectrum of the algebra of holomorphic and uniformly continuous functions on the unit ball of $\ell_p$ ($1\le p<\infty$).
We show that the inherent geometry of these spaces provides a fundamental ingredient for our results. We prove that whenever $p\in\N$ ($p\ge 2$), the fiber of every $z\in B_{\ell_p}$ contains a set of cardinal $2^{\mathfrak c}$ such that any two elements of this set belong to different Gleason parts. For the case $p=1$, we complete the known description of the fibers, showing that, for each $z\in\overline B_{\ell_1''}\setminus S_{\ell_1}$, the fiber over $z$ is not a singleton. Also, we establish that different fibers over elements in $S_{\ell_1''}$ cannot share Gleason parts. 
\end{abstract}

\baselineskip=.65cm

\title{Fibers and Gleason parts for the maximal ideal space of  $\A_u(B_{\ell_p})$}

\maketitle

\section{Introduction} 
Over the years, for the study of the maximal ideal space (or spectrum) of an algebra of holomorphic functions, two partitions of this set have proved useful in describing its analytical structure. On the one hand, the notion of \textit{part}, introduced by Gleason \cite{gleason1957} in the 1950's, comes from an equivalence relation arising from the metric of the surrounding space. On the other hand, the idea of \textit{fiber}, developed since the foundational work of Schark \cite{schark1961maximal}, refers to the restriction of homomorphisms to the dual of the underlying space. These two approaches to analyzing the structure of the spectrum have followed separate paths, with some meeting points in certain works. The main goal of this article is to contribute to the description and interrelation between fibers and  Gleason parts of the spectrum of the algebra of holomorphic and uniformly continuous functions on the unit ball of $\ell_p$ ($1\le p<\infty$).

In the context of several complex variables, the study of Gleason parts was carried on the maximal ideal space of $\H^\infty(B)$ (the algebra of bounded holomorphic functions on $B$), where $B$ is the open unit disk in $\C$, the polydisk or the Euclidean ball in $\C^n$. There is extensive literature on this research, for instance,  \cite{bear2006lectures,garnett2006bounded,gorkin1989gleason,hoffman1967bounded,konig1969gleason,mortini1994gleason,stout1971theory}. When restricting to $\A(B)$, the algebra of holomorphic functions on $B$ which are continuous on its closure $\overline B$, the spectrum  $\M(\A(B))=\overline{B}$ and its Gleason parts are easily described as $B$  and $\{\delta_z\}$ for every $z$ in the boundary of $B$. The situation is far more involved in the infinite dimensional framework. Whenever $B_X$ is the open unit ball of an infinite dimensional Banach space $X$, the study of  $\H^\infty(B_X)$ and of some of its closed subalgebras, as $\A_u(B_X)$ (uniformly continuous holomorphic functions on $B_X$), was initiated in 1991 with the works of Aron, Cole, Gamelin, Johnson and others, starting with the seminal paper  \cite{aron1991spectra}. In this setting most of the work was addressed to study the size of the fibers, the Shilov and Choquet boundaries, the cluster theorem, and the containment of big analytic structures inside the fibers \cite{acosta2009boundaries,acosta2007shilov,arenson1983gleason,aron2016cluster,aron2012cluster,aron2018analytic,carando2023homomorphisms,castillo2023polynomial,choi2021boundaries, cole1992analytic,farmer1998fibers,globevnik1978interpolation,johnson2014cluster,johnson2015cluster}. It is interesting to mention that if there is a continuous polynomial on $X$ which is not weakly continuous on bounded sets, then the fibers of the spectrum of $\A_u(B_X)$  over any point in $B_{X''}$ are very rich, thus revealing a relevant difference with the finite-dimensional case. 

To continue this introduction, let us establish some notation and definitions. Consider an infinite dimensional complex Banach space $X$ with open unit ball $B_X$, unit sphere $S_X$, closed unit ball $\barB_X$, and topological dual and bidual spaces $X'$ and $X''$, respectively. Inside $\H^\infty(B_X)$ (the algebra of all bounded holomorphic mappings on $B_X$) we are interested in $\A_u(B_{X})$, the algebra of holomorphic functions $f\colon B_{X}\to \C$ that are uniformly continuous with respect to the norm of $X$. Additionally, $\A_a(B_X)$ stands for the closed subalgebra of the latter space generated by $X'$ and the constant functions. Thus, the inclusions $\A_a(B_{X}) \subset \A_u(B_{X}) \subset \H^\infty(B_X)$ hold, and the three function spaces are complex Banach algebras endowed with the supremum norm, $\|f\|=\sup\{|f(x)|\colon \|x\|<1\}$.  

For a commutative complex Banach algebra $\A$,  its maximal ideal space (or spectrum for short) $\M(\A)$  is the set of all nonzero $\C$-valued homomorphisms.
As each function in $\A_u(B_{X})$ extends continuously to $S_{X}$, then
$\M(\A_u(B_{X}))$ contains the point evaluations $\delta_x$ for all $x \in X,  \|x\|\le 1$.
Furthermore, according to \cite{davie1989theorem}, every $f\in \H^\infty(B_X)$ extends (via the extension developed in \cite{aron1978hahn}) to $\tilde f\in \H^\infty(B_{X''})$ in a natural way.  We refer to this mapping  $f \leadsto \tilde f$ as the canonical extension, that constitutes a Banach algebra homomorphism.  Using standard arguments, it can be seen that this extension maps functions in $\A_u(B_X)$ to functions in $\A_u(B_{X''})$. Thus, each point $z \in B_{X''}$ (or $\barB_{X''}$) corresponds to an element $\tilde{\delta}_{z}\in \M(\H^\infty(B_X))$ (or $\M(\A_u(B_{X})))$, where $\tilde{\delta}_{z}(f)=\tilde f(z)$. As $X'$ is included in $\A_u(B_X)$, $\tilde{\delta}_{z} \ne \tilde{\delta}_{w}$ whenever $z\ne w$ belong to $B_{X''}$ (or to $\barB_{X''}$ for $\M(\A_u(B_{X}))$).
To simplify the notation, we write $\delta_{z}(f)$ instead of $\tilde \delta_{z}(f)$ for any $z\in B_{X''}$ and $f\in \H^\infty(B_X)$ and also for any $z\in S_{X''}$ and $f\in \A_u(B_{X})$, as in this case, $\tilde f(z)$ is uniquely determined. 

In general, for infinite-dimensional Banach spaces, it is well-known (see, e.g., \cite{aron1991spectra}) that $\M(\A_u(B_X))$ usually contains much more than mere evaluations at points of $\barB_{X''}$. However, there are some distinguished cases, such as when $X = c_0$ (the Banach space of complex null sequences), in which $\M(\A_u(B_{c_0}))=\{\delta_z\colon z\in \overline{B}_{\ell_\infty}\}$. This result, which could be traced to \cite{dineen2012complex}, (see also \cite[Introduction to Section 2]{aron2020gleason}), is a particular case of a Corona theorem. Loosely speaking, we say that a Corona theorem holds when the evaluation homomorphisms form a dense set in the spectrum of the considered algebra. It was Carleson \cite{carleson1962interpolations}, who set the first Corona theorem for $\H^\infty(\D)$, with $\D$ the complex unit disk. For finite-dimensional Banach spaces, the Corona theorem trivially holds for $\A_u(B_X)$, but it is an open question for general infinite dimensional ones, even for the classical $\ell_p$-spaces, $1\le p <\infty$.   \\
To achieve a better understanding of the extent of the evaluation morphisms in the spectrum, weaker structures can be used. That is the case of the cluster set $Cl_{B_X}(f,z)$ at $z\in \barB_{X''}$, defined as the set of all limits of values of $f$ along nets in $B_X$ converging weak-star to $z$. A {\it Cluster value theorem} at $z \in \bar B_{X''}$ means the  assertion
\begin{equation*}\label{cluster thm}
Cl_{B_X}(f,z) = \widehat f(\M_z(\A_u(B_{X}))), \qquad \forall f\in \A_u(B_{X}), 
\end{equation*}
where $\widehat f$ is the Gelfand transform of $f$ and $\M_z$ is the fiber in $\M(\A_u(B_{X}))$ over $z$ (see Section~\ref{Sec:preliminaries} for the definitions). It is proved in  \cite[Thm. 3.1]{aron2012cluster} that $\A_u(B_{\ell_p})$ satisfies the Cluster value theorem at $z=0$ for every $1< p<+\infty$  and it is shown in   \cite[Thm. 4.1]{aron2012cluster}  that $\A_u(B_{\ell_2})$ satisfies the Cluster value theorem at any point of the unit ball. 
For more information about the Corona and Cluster theorems, we refer the reader to the survey \cite{CaGaMaSeSpectra}.

 We aim to study the structure of the fibers and Gleason parts (see precise definitions below) of $\M(\A_u(B_{\ell_p}))$ for $1\leq p<\infty$. This paper represents a natural extension of the work done by several researchers in \cite{aron2012cluster,aron2016cluster,aron2020gleason, aron2018analytic,farmer1998fibers,johnson2014cluster,johnson2015cluster}. In particular, this kind of comprehensive examination of fibers,  Gleason parts, and their interaction, for the spectra of  $\A_u(B_{c_0})$ and  $\H^\infty(B_{c_0})$, has been conducted in \cite{aron2020gleason}. However, it is important to note that functions in  $\A_u(B_{\ell_p})$  display notably more complex behavior than those in  $\A_u(B_{c_0})$. For instance, for $k\ge p$, the polynomial $P(x)=\sum_j x_j^k$ belongs to $\A_u(B_{\ell_p})$ but it is not weakly continuous on bounded sets while functions in $\A_u(B_{c_0})$ are all approximable by finite type polynomials, implying that $\A_u(B_{c_0})$ coincides with $\A_a(B_{c_0})$ (see \cite[Prop.~1.59]{dineen2012complex} or \cite[Sect.~3.4]{gamelin1994analytic}). As a consequence of the latter fact, fibers in $\M(\A_u(B_{c_0}))$ are singletons and each fiber is identified with an element of $\barB_{\ell_\infty}$. For any $1<p<\infty$, the fibers (of the spectrum of $\A_u(B_{\ell_p})$) over elements in $S_{\ell_p}$ are also singletons while the fibers over elements in $B_{\ell_p}$ have a very rich structure, see \cite[Prop.~2.6]{aron2016cluster}. Finally, the spectrum of $\A_u(B_{\ell_1})$ deserves special attention.
The fiber over any element $z$ in $S_{\ell_1}$ is just $\{\delta_z\}$ while the fiber over any $z\in B_{\ell_1''}$ has uncountable elements. Moreover, there are infinitely many points in $S_{\ell_1''}$ whose fibers have at least the cardinality of the continuum $\mathfrak{c}$, see \cite[Props.~3.1,~3.2 and 3.3 ]{aron2016cluster}. However, the description of the fibers of $\A_u(B_{\ell_1})$ was not complete until now that we distinguished all fibers into these two groups, according to geometric properties of $z$.

To be more precise in our objective, we concentrate on the interaction between fibers and Gleason parts of the maximal ideal space of  $\A_u(B_{\ell_p})$ with special emphasis on its non-corona part; i.e. on the closure in $\M(\A_u(B_{\ell_p}))$ of the set of all homomorphisms that are evaluations on points of $\barB_{\ell_p}$ (for $1< p < \infty$) and $\barB_{\ell_1''}$ (for $p=1$).
\\
Section~\ref{Sec:preliminaries} is devoted to recalling some preliminaries and presenting the guided lines of our study and some general results that will be useful to us. The connection with the complex geometry of the Banach space $X$ will be discussed, since, e.g., 
in Proposition~\ref{prop:GP_complex_extrem} it is shown that $z\in S_{X''}$ is a complex extreme point of $\overline{B}_{X''}$ if and only if the Gleason part of $\delta_z$ is contained in the fiber over $z$, $\M_z$ (see \eqref{eq:fibra en z} for the definition).

In Section 3 we study the Gleason parts of $\M(\A_u(B_{\ell_p}))$, $1<p<\infty$. We show in Theorem~\ref{BetaNenM0} that fiber $\M_0$ in $\M(\A_u(B_{\ell_p}))$ contains a set of cardinal $2^{\mathfrak c}$ such that any two elements of this set belong to different Gleason parts. This result is extended in Proposition~\ref{prop: p entero}, to any fiber in $x\in B_{\ell_p}$  for the case $ p \in \N$.
\\
Section 4 deals with the structure of $\M(\A_u(B_{\ell_1}))$. We present a new approach to describing the $z\in S_{\ell_1''}$ with no singleton fiber that allows us to identify these elements fully. Additionally, as a consequence of Proposition~\ref{prop:GP_complex_extrem}, we show that no Gleason part can contain homomorphisms from more than one {\sl edge} fiber.
\\
Finally, in Section 5  we propose some open problems and add some comments about $\M(\A_u(B_{\ell_p}))$ ($1\leq p<\infty$). In addition, as an appendix, we include a description of the complex extreme points of the unit ball of a $p$-sum of two Banach spaces, $X \oplus_p Y$, as we consider it has interest on its own.

\section{Preliminaries and general results}
\label{Sec:preliminaries}

For any commutative Banach algebra  $\A$ with identity, its spectrum  $\M(\A)$ considered with the $\sigma(\A^*,\A)$ topology ($w^*$ for short) is a compact Hausdorff space. Via the Gelfand transform $f\mapsto \hat f$, $\A$ is identified with a subspace of the space of continuous functions of $C(\M(\A))$, where $\hat f(\varphi)=\varphi(f)$ for any $f\in \A$ and any $\varphi\in \M(\A)$.

Either for $\A=\A_u(B_X)$ or $\A=\H^\infty(B_X)$, the mapping $\pi\colon\M(\A) \to \barB_{X''}$ given by $\pi(\varphi):= \varphi|_{X'}$, is a well-defined linear operator (as $X' \subset \A$) and is surjective \cite{aron1991spectra}. With the natural embedding $J_X\colon X\to X''$, we have the following commutative diagram: 
\begin{equation*}\label{diagrama}
\xymatrix{
B_X \ar[r]^\delta \ar[rd]_{J_X}  & \M(\A)\ar[d]^{\pi}\\
& \barB_{X''}.\\
}
\end{equation*}
Conventionally, omitting the algebra when it is clear, the {\em fiber of $\M(\A)$ over $z\in \barB_{X''}$,}  will be denoted as

\begin{equation}\label{eq:fibra en z}
\M_z:= \M_z(\A)= \{ \varphi \in \M(\A) \ | \
\pi(\varphi) = z\}.
\end{equation}

Understanding the structure of these fibers is beneficial in the examination of Gleason parts that are defined as follows. First, we recall the Gleason distance between $\varphi$ and $\psi$ in $\M(\A)$, given by $\|\varphi - \psi\|:=\sup \{|\varphi(f) - \psi(f)| \colon \|f\| \leq 1\}$. Clearly, $\|\varphi - \psi\|\le 2$ and it can be seen, with some work, that the relation $\varphi \sim \psi$ iff $\|\varphi - \psi\|< 2$ is an equivalence relation in $\M(\A)$, see for instance \cite[Lem.~16.1]{stout1971theory}. The sets of equivalence classes are referred to as the Gleason parts of $\M(\A)$. We write
\begin{equation}\label{Def-Gleason1}
\GP(\varphi):= \{\psi\in\M(\A)\colon\  \|\varphi - \psi\|< 2\}
\end{equation}
for the Gleason part containing $\varphi$. 
A Gleason part with a unique element is called trivial or a \textit{singleton}.

When trying to figure out whether two elements of $\M(\A)$ belong or not to the same Gleason part, the pseudo-hyperbolic distance comes into play. Recall that the {\em pseudo-hyperbolic distance} between $\varphi$ and $\psi$ in $\M(\A)$ is
$$
\rho(\varphi, \psi) :=
\sup\{|\varphi(f)|\colon\  f \in \mathcal A, \|f\| \leq 1, \psi(f) = 0 \}.
$$
For the uni-dimensional unit disc $\D$, when $\A = \A(\D)$ or $\A=\H^\infty(\D)$, the pseudo-hyperbolic distance for $\lambda$ and $\mu$ in $\D$ is give by
$$
\rho(\delta_\lambda, \delta_\mu) = \Big| \frac{\lambda - \mu}{1 - \overline{\lambda}\mu}\Big|.
$$
The formula given above remains true if $\A=\A(\D)$ for $\lambda, \mu\in \overline{\D}$, if $|\lambda|=1$ and $\lambda \ne \mu$, in this case, $\rho(\delta_\lambda, \delta_\mu) =1$. Also, it extends coordinatewise to $\A_u(B_{c_0})$ and $\H^\infty(B_{c_0})$ \cite[Example~1.7]{aron2020gleason}. See Examples 1.8 and 1.9 in \cite{aron2020gleason} for the pseudo-hyperbolic distance between $\delta_x$ and $\delta_y$ in $\A_u(B_{X})$ and $\H^\infty(B_{X})$, for $X=\ell_2$ and $X=\mathcal L(H)$ where $\mathcal L(H)$ is the Banach space of bounded linear operators from a Hilbert space $H$ into itself. 
 
It follows from \cite[Thm.~2.8]{bear2006lectures} that $ \|\varphi - \psi\|< 2$ iff $\rho(\varphi, \psi) < 1$, thus for the Gleason part containing $\varphi$, we will use \eqref{Def-Gleason1} or its equivalent form 
\begin{equation}\label{Def-Gleason2}
\GP(\varphi)=\{\psi\in\M(\A)\colon \rho(\varphi, \psi) < 1\}.
\end{equation}

Now, we are in a position to establish some basic results about fibers and Gleason parts that will be used throughout this work. 
The following result is another way to present \cite[Thm.~3.9]{stout1971theory}. We give a proof for the sake of completeness. 

\begin{lemma}\label{Lema-util}
Let $X$ be a complex Banach space, $\A=\A_u(B_X)$ or $\H^\infty(B_X)$ and let $\varphi, \psi\in \M(\A)$. Suppose that there is a net  $(f_\beta)_\beta\subset \A$ such that $\|f_\beta\|\le 1$ for all $\beta$, $\varphi(f_\beta)\to \lambda_0$ for some $\lambda_0\in \barD \setminus\{1\}$ and $\psi(f_\beta)\to 1$. Then $\GP(\varphi)\not=\GP(\psi)$.
\end{lemma}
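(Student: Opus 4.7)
The plan is to show that $\rho(\varphi,\psi)=1$, which by \eqref{Def-Gleason2} places $\varphi$ and $\psi$ in different Gleason parts; equivalently, for every $\eps>0$ I will exhibit $h\in\A$ with $\|h\|\le 1$, $\psi(h)=0$, and $|\varphi(h)|>1-\eps$. The key device is a Möbius automorphism of $\D$: composing $f_\beta$ with $M_a(z)=(z-a)/(1-\bar a z)$ centered at $a=\psi(f_\beta)$ would normalize the $\psi$-value to $0$. The subtlety is that $a_\beta:=\psi(f_\beta)$ lies in $\bar\D$ and may well satisfy $|a_\beta|=1$, in which case the corresponding Möbius composition need not define an element of $\A$, since the denominator could fail to be bounded away from zero on $B_X$.

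To circumvent this, I would first contract $f_\beta$ by a real scalar $r\in(0,1)$. Set $a_\beta=\psi(f_\beta)$, note that $|r a_\beta|<1$, and define
$$h_\beta^{(r)}:=\frac{r f_\beta - r a_\beta}{1-r^2\bar{a_\beta} f_\beta}.$$
Since $|1-r^2\bar{a_\beta}f_\beta(x)|\ge 1-r^2>0$ for every $x\in B_X$, the denominator is invertible in $\A$ (both in $\A_u(B_X)$ and in $\H^\infty(B_X)$), so $h_\beta^{(r)}\in\A$; the mapping property $|M_{r a_\beta}(z)|\le 1$ on $\bar\D$ yields $\|h_\beta^{(r)}\|\le 1$. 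Because $\varphi$ and $\psi$ are Banach algebra homomorphisms, they act on rational expressions in $f_\beta$ by substitution, so
$$\psi(h_\beta^{(r)})=M_{r a_\beta}(r a_\beta)=0,\qquad \varphi(h_\beta^{(r)})=\frac{r(\varphi(f_\beta)-a_\beta)}{1-r^2\bar{a_\beta}\varphi(f_\beta)}.$$

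The proof is then completed by an iterated limit: first in $\beta$, using $\varphi(f_\beta)\to \lambda_0$ and $a_\beta\to 1$, and then in $r\to 1^-$, yielding
$$\lim_{r\to 1^-}\lim_\beta \varphi(h_\beta^{(r)})=\lim_{r\to 1^-}\frac{r(\lambda_0-1)}{1-r^2\lambda_0}=\frac{\lambda_0-1}{1-\lambda_0}=-1,$$
where the hypothesis $\lambda_0\neq 1$ is exactly what guarantees that $1-\lambda_0$ is nonzero and produces a limit of modulus $1$. Thus, given $\eps>0$, one chooses $r$ sufficiently close to $1$ and then $\beta$ sufficiently far along, obtaining $|\varphi(h_\beta^{(r)})|>1-\eps$. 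This gives $\rho(\varphi,\psi)\ge 1-\eps$ for every $\eps>0$; combined with the trivial bound $\rho\le 1$, we conclude $\rho(\varphi,\psi)=1$ and hence $\GP(\varphi)\neq\GP(\psi)$. The only delicate point is the boundary behavior of $\psi(f_\beta)$, which the contraction by $r<1$ defuses at the price of an extra limit; everything else reduces to routine Möbius calculus inside the Banach algebra $\A$.
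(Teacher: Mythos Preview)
Your proof is correct and uses essentially the same idea as the paper's: compose $f_\beta$ with a suitable M\"obius automorphism of $\D$ and take a double limit (first in $\beta$, then in the M\"obius parameter). The only difference is cosmetic---the paper applies $\xi_\alpha(\lambda)=(\alpha-\lambda)/(1-\alpha\lambda)$ with real $\alpha\in(0,1)$ directly to $f_\beta$ and concludes via $\|\varphi-\psi\|=2$ (obtaining $\varphi$-values $\to 1$ and $\psi$-values $\to -1$), whereas you first scale to $rf_\beta$, center the M\"obius map at $r\psi(f_\beta)$ so that $\psi(h_\beta^{(r)})=0$ exactly, and conclude via $\rho(\varphi,\psi)=1$; both the parameter $\alpha<1$ and your contraction by $r<1$ serve the identical purpose of keeping the M\"obius denominator bounded away from zero on $B_X$.
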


\begin{proof} For $0<\alpha <1$ consider the Mo\"ebius function $\xi_\alpha\colon \barD\to \barD$,
$\xi_\alpha(\lambda)= \d\frac{\alpha - \lambda}{1 -  \alpha \lambda}$. As $\xi_\alpha \in \A(\D)$, $\xi_\alpha\circ f\in \A$ for all $\alpha$ and all $f\in \A$, $\|f\|\le 1$.   Since, $\xi_\alpha(\lambda)= (\alpha - \lambda)\sum_{m=0}^\infty (\alpha \lambda)^m$ and  this series converges uniformly on $\overline{\D}$, we see that  $\phi(\xi_\alpha\circ f)=
\xi_\alpha(\phi(f))$, for every $\phi\in \M(\A)$ and $f\in\A$, $\|f\|\le 1$. Hence,
$$
\varphi(\xi_\alpha\circ f_\beta)=\xi_\alpha(\varphi(f_\beta))\underset{\beta}{\longrightarrow}\xi_\alpha(\lambda_0)\quad  \text{and}\quad \xi_\alpha(\lambda_0) \underset{\alpha \to 1^-}{\longrightarrow} 1.
$$ 
On the other hand, 
$$\psi(\xi_\alpha\circ f_\beta)=\xi_\alpha(\psi(f_\beta))\underset{\beta}{\longrightarrow}\xi_\alpha(1)=-1.
$$
As $\|\xi_\alpha\circ f_\beta\|\le 1$ for every $\alpha$ and $\beta$, we see that 
$$\|\varphi-\psi\| \ge | \varphi(\xi_\alpha\circ f_\beta)- \psi(\xi_\alpha\circ f_\beta)|,$$
and the result follows.
\end{proof}

\begin{remark} \label{ADLM basicos}
The following two facts about Gleason parts and their relation with fibers for $\A=\A_u(B_X)$ or $\H^\infty(B_X)$ appeared in \cite[Prop.~1.1]{aron2020gleason}:
\begin{enumerate}[\upshape (a)]
   \item \label{Rem.BX'' en GP0} $\{\delta_z\colon z\in B_{X''}\} \subset \GP(\delta_0)$.
    \item \label{Rem.borde-interior separados} If $z\in S_{X''}$, $w\in B_{X''}$, $\varphi\in \M_z$,  $\psi\in \M_w$, then $\GP(\varphi)\not=\GP(\psi)$.
\end{enumerate}
\end{remark}

For an infinite dimensional Banach space $X$, the fiber $\M_{z}$ of $\M(\H^\infty(B_X))$ over any $z\in\barB_{X''}$ is infinite \cite[Thm.~11.1]{aron1991spectra}. If, in addition, $X$ admits a continuous polynomial whose restriction to the open unit ball is not weakly continuous at $0$, the following facts hold: the fiber $\M_z$ of $\M(\A_u(B_X))$ is infinite for any $z\in B_{X''}$ \cite[Prop. 2.6]{aron2016cluster} and $\{\delta_z\colon z\in B_{X''}\}\subsetneqq \GP(\delta_0)$ \cite[Cor.~1.3]{aron2020gleason}. Item~\eqref{Rem.BX'' en GP0} of the above remark, says that $\GP(\delta_0)$ meets all the fibers over elements $z\in  B_{X''}$ and, the latter result \cite[Cor.~1.3]{aron2020gleason} says that there are situations for which $\GP(\delta_0)$ may contain several elements of the same fiber. At the same time, item~\eqref{Rem.borde-interior separados}, shows that $\GP(\delta_0)$ does not meet any $\M_z$ for  $z\in S_{X''}$. In \cite[Thm.~3.7]{aron2020gleason} it is shown that every fiber in $\M(\H^\infty(B_{c_0}))$ over a point in $B_{\ell_\infty}$ contains $2^c$ discs lying in different Gleason parts. For this space, $c_0$, the case of $\M(\A_u(B_{c_0}))$ is very different, starting from the fact that all the fibers are singletons. Also, a complete characterization of its Gleason parts is given. For instance, in \cite[Cor.~2.6]{aron2020gleason}, it is shown that for all $z$ in the distinguished boundary $\T^\N$, $\GP(\delta_z)=\{\delta_z\}$. However, there are many other points $z\in S_{\ell_\infty}$ for which the identifications $\GP(\delta_z) \thickapprox \D^{n}$ (a finite dimensional polydisk) and $\GP(\delta_z) \thickapprox B_{\ell_\infty}$ are satisfied.  
\\
This type of behavior leads us to face different natural questions about fibers, Gleason parts, and their interaction. Below we list some of them.

\begin{itemize}
    \item Which fibers or Gleason parts are nontrivial? That is, not a singleton.
    \item Which fibers intersect several Gleason parts?
    \item Which fibers contain singleton Gleason parts?
    \item When is there a copy of a ball (or a disk) at the intersection of a fiber and a Gleason part?
    \item Which Gleason parts intersect different fibers?
\end{itemize}

Addressing these questions poses a challenging task, even if we concentrate on $\A_u(B_X)$ for $X=\ell_p$, $1\le p <\infty$. Next, we will present some general results aimed at providing tools to make progress on some of the aforementioned issues.

The following proposition provides us with sufficient conditions to ensure that two homomorphisms that are in the non-corona part of the spectrum of the algebra belong to different Gleason parts. Recall that for $f\in \A_u(B_X)$ we denote by $\widetilde f\in\A_u(B_{X''})$ its canonical extension.

\begin{proposition}\label{tecnical-prop}
  Let $X$ be a complex Banach space and   let $\varphi\not=\psi\in\M(\A_u(B_X))$ such that $\varphi=w^*-\lim \delta_{z_\alpha}$, $\psi=w^*-\lim \delta_{w_\alpha}$ with $(z_\alpha), (w_\alpha)\subset \overline B_{X''}$.
    \begin{enumerate}[\upshape(a)]
    \item \label{lem:diff GP-1} If there exists $T\in\mathcal L(X,X)$ such that $\|T\|\le 1$, $T''(z_\alpha)=z_\alpha$, $T''(w_\alpha)=0$ for all $\alpha$ and $\varphi\not\in \GP(\delta_0)$ then $\GP(\varphi)\not=\GP(\psi)$.
    \item \label{lem:diff GP-2} If there exists $P\in\mathcal P(^mX)$, $\|P\|= 1$, such that $\widetilde P(z_\alpha)\to 1$ and $w_\alpha=\lambda_\alpha z_\alpha$ with $\lambda_\alpha^m\not\to 1$ then $\GP(\varphi)\not=\GP(\psi)$.
    \end{enumerate}
\end{proposition}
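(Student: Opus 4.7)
For part (a), I would exploit the composition operator $S\colon\A_u(B_X)\to\A_u(B_X)$ defined by $S(f)=f\circ T$. Since $\|T\|\le 1$ forces $T(B_X)\subset B_X$, $S$ is a well-defined contractive Banach algebra homomorphism, and consequently its transpose $\phi\mapsto\phi\circ S$ is $1$-Lipschitz on $\M(\A_u(B_X))$ with respect to the dual norm: for any $\phi_1,\phi_2\in\M(\A_u(B_X))$, $\|\phi_1\circ S-\phi_2\circ S\|\le\|\phi_1-\phi_2\|$. The key input is the functorial identity $\widetilde{f\circ T}=\widetilde f\circ T''$ satisfied by the canonical (Aron--Berner) extension, which together with the hypotheses $T''(z_\alpha)=z_\alpha$ and $T''(w_\alpha)=0$ yields $\varphi\circ S=\varphi$ and $\psi\circ S=\delta_0$. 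The contraction estimate applied to the pair $(\varphi,\psi)$ then gives
\[
\|\varphi-\delta_0\| \;=\; \|\varphi\circ S-\psi\circ S\| \;\le\; \|\varphi-\psi\|.
\]
Since by hypothesis $\varphi\notin\GP(\delta_0)$, the left-hand side equals $2$, hence $\|\varphi-\psi\|=2$ and $\GP(\varphi)\ne\GP(\psi)$.

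For part (b), I would apply Lemma~\ref{Lema-util} to the constant net consisting of the single function $P$, with the roles of $\varphi$ and $\psi$ swapped. From $\varphi=w^*\!-\!\lim\delta_{z_\alpha}$ and $\widetilde P(z_\alpha)\to 1$ one reads off $\varphi(P)=1$. For $\psi$, the $m$-homogeneity of $\widetilde P$ yields $\widetilde P(w_\alpha)=\widetilde P(\lambda_\alpha z_\alpha)=\lambda_\alpha^{\,m}\widetilde P(z_\alpha)$, and since $\widetilde P(z_\alpha)$ is eventually bounded away from $0$, dividing shows that the net $(\lambda_\alpha^{\,m})$ converges, with limit $\psi(P)$. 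Because $\|\widetilde P\|=\|P\|=1$ and $w_\alpha\in\barB_{X''}$, we have $|\psi(P)|\le 1$, and the hypothesis $\lambda_\alpha^{\,m}\not\to 1$ translates into $\psi(P)\in\barD\setminus\{1\}$. Lemma~\ref{Lema-util} then concludes $\GP(\varphi)\ne\GP(\psi)$.

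The main delicate point is the functoriality $\widetilde{f\circ T}=\widetilde f\circ T''$ invoked in part (a): on $m$-homogeneous polynomials it reflects the compatibility of the Aron--Berner extension with the symmetric multilinear form, and it transfers to $\A_u(B_X)$ via the monomial decomposition. The only additional subtlety lies in part (b), where convergence of $\lambda_\alpha^{\,m}$ must be deduced from the existence of the $w^*$-limit $\psi(P)$; once this observation is made, both parts reduce cleanly to contraction of Gleason distance under a norm-one algebra morphism (for (a)) and to Lemma~\ref{Lema-util} (for (b)).
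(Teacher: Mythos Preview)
Your proof is correct and follows essentially the same approach as the paper: in part~(a) both arguments rest on the identity $\widetilde{f\circ T}=\widetilde f\circ T''$ to establish $\varphi\circ S=\varphi$ and $\psi\circ S=\delta_0$, and in part~(b) both compute $\varphi(P)=1$, $\psi(P)\ne 1$ and invoke Lemma~\ref{Lema-util}. Your packaging is marginally cleaner---in~(a) you phrase the conclusion as a single contraction inequality $\|\varphi-\delta_0\|\le\|\varphi-\psi\|$ rather than building an explicit witnessing net, and in~(b) you observe that the full net $(\lambda_\alpha^{\,m})$ actually converges (since $\lambda_\alpha^{\,m}=\widetilde P(w_\alpha)/\widetilde P(z_\alpha)\to\psi(P)$), whereas the paper passes to a convergent subnet---but these are stylistic differences, not substantively different routes.
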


\begin{proof} To prove \eqref{lem:diff GP-1}, note that for any $f\in \A_u(B_X)$ and $T$ as in the statement, we have
$$
\begin{array}{rl}
\varphi(f\circ T)&= \lim\limits_\alpha \delta_{z_\alpha}(f\circ T)=\lim\limits_\alpha\widetilde{f\circ T}(z_\alpha)=\lim\limits_\alpha \widetilde f(T''(z_\alpha))\\
& =\lim\limits_\alpha \widetilde f(z_\alpha)=\varphi(f)\quad \textrm{and}  \\
\psi(f\circ T)&= \lim\limits_\alpha \delta_{w_\alpha}(f\circ T)=\lim\limits_\alpha\widetilde{f\circ T}(w_\alpha)=\lim\limits_\alpha \widetilde f(T''(w_\alpha))=f(0).
\end{array}
$$
As $\varphi\not\in \GP(\delta_0)$, there exists a net $(f_\beta)\subset \A_u(B_X)$ such that $\|f_\beta\|\le 1$, $f_\beta(0)=0$ and $\varphi(f_\beta)\to 1$. Now, $(f_\beta\circ T)\subset \A_u(B_X)$ with $\|f_\beta\circ T\|\le 1$, $\varphi(f_\beta\circ T)=\varphi(f_\beta)\to 1$ and $\psi(f_\beta\circ T)=f_\beta(0)=0$, showing that $\varphi$ and $\psi$ belong to different Gleason parts.

For \eqref{lem:diff GP-2}, with $P$ as in the statement, observe that 
$$
\varphi(P)=\lim_\alpha \delta_{z_\alpha}(P)=\lim 
\widetilde{P}(z_\alpha)= 1.
$$
On the other hand, taking a subnet if necessary, we may consider that $\lambda_\alpha^m\to \lambda   \not= 1$. Hence, 
$$
\lim_\alpha \delta_{w_\alpha}(P)=\lim_\alpha \widetilde{P}(\lambda_\alpha z_\alpha)=\lim_\alpha \lambda_\alpha^m\widetilde{P}(z_\alpha)=\lambda.
$$
Now, an application of Lemma~\ref{Lema-util} concludes the proof.
\end{proof}
    
\begin{remark}
The Gleason metric on the spectrum of $\A_u(B_X)$ is determined by  the normed space of all continuous polynomials $\P(X)$. That is, 
if $\varphi, \psi\in\M(\A_u(B_X))$ then
\[
\|\varphi-\psi\|=\sup\{|\varphi(P)-\psi(P)|\colon\, P\in\P(X),\, \|P\|\le 1\}.
\]
\rm

Indeed,  if $f\in \A_u(B_X)$, the Taylor series expansion converges uniformly to $f$ on $rB_X$ for any $0<r<1$. Then  we can proceed as in \cite[Prop.~5.2]{mujica1991linearization} to derive that the sequence of Cesaro means of the Taylor series expansion of $f$ at 0, say $(Q_n)$, converges uniformly on $rB_X$ to $f$ for all $0<r<1$, and $\|Q_n\|\leq \|f\|$ for every $n$. On the other hand, by using the uniform continuity of $f$, the family $(f_r)$ with $f_r(x):=f(rx)$ converges uniformly to $f$ on $B_X$. Additionally, for each $r$, the sequence $(Q_{n,r})_n$ defined by $Q_{n,r}(x):=Q_n(rx)$ converges uniformly to $f_r$ on $B_X$ with
$$
\|Q_{n,r}\|\leq \|Q_n\|\leq \|f\|.
$$
Thus, for every $\varphi, \psi\in\M(\A_u(B_X))$ we have 
\begin{eqnarray*}
\|\varphi-\psi\| &= &\sup\{|(\varphi-\psi)(f)|\colon\ f\in {\A_u(B_X)},\ 
\|f\|\leq 1\}\\
&=&\sup\{|(\varphi-\psi)(P)|\colon\ P\in \P(X), \ 
\|P\|\leq 1\},  
\end{eqnarray*}
which shows the statement.
\end{remark}

In the study of the fibers or Gleason parts, certain elements in $\M(\A)$ possess geometric properties that make them distinctive, as they constitute singletons. By \cite[Pag. 162]{stout1971theory}, this is the case of the strong boundary points and peak points that we define next. A point $\varphi \in \M(\A)$ is a \textit{strong boundary point} for $\A$ if for every open neighborhood $U\subset \M(\A)$ of $\varphi$ there is $f\in \A$ so that $\|f\| = \hat f(\varphi)=1$ and $|\hat f(\psi)| <1$ if $\psi\not\in U$. Also, $\varphi$ is a \textit{peak point} for $\A$ if there is $f\in\A$ such that $\hat f(\varphi)=1$ and $|\hat f(\psi)| <1$ for all $\psi \in \M(\A)\setminus \varphi$. 

For a function algebra $\A$ contained in $C_b(\Omega)$, the space of complex bounded continuous functions on $\Omega$, a topological Hausdorff space, we may consider the class of elements described above for the particular case of points in $\Omega$ (rather than for homomorphisms in $\M(\A)$). This concept appeared related to the study of a generalization of the notion of boundary due to Globevnik \cite{globevnik1978interpolation, globevnik1979boundaries}. 
A point $x\in \Omega$ is a \textit{strong boundary point} for $\A$ (in the sense of Globevnik) if for each open neighborhood $U\subset \Omega$ of $x$, there exists $f\in \mathcal{A}$ such that $|f(x)|=1$ and $\sup_{y\in \Omega\setminus U}|f(y)|<1$. On the other hand, $x$ is a \textit{strong peak point} for $\A$  (in the sense of Globevnik) if there exists $f\in \mathcal{A}$ satisfying $|f(x)|=1$ and $\sup_{y\in \Omega\setminus U}|f(y)|<1$ for all open neighborhoods $U\subset \Omega$ of $x$.
\\
The setting makes clear which definition is used. When we talk about $\varphi\in \M(\A)$ we use the classical definitions (even for evaluations $\delta_x$) and for points $x\in \Omega$ we use the notions in the sense of Globevnik.

The classes of extreme points cannot be avoided in studying geometric properties. For $C\subset X$ a convex set, a point $z$ in $C$ is a \textit{real extreme point} of $C$ if $z$ is not the midpoint of any line segment contained in  $C$. Equivalently, if $z+t y\in C$ for all $t\in\R$, $|t|\le 1$, then $y=0$. Also, $z$ in $C$ is a \textit{complex extreme point} of $C$ if $z+\lambda y\in C$ for all $\lambda\in\barD$ implies that $y=0$. The real extreme points of $C$ are denoted by $\Ext_{\R}(C)$ while the complex extreme points are denoted by $\Ext_{\C}(C)$. Clearly we have $\Ext_{\R}(C)\subset \Ext_{\C}(C)$.

When considering the intersection between fibers and Gleason parts, complex extreme points of the unit ball of $X''$ are relevant.
For the uniform algebra on $\barB_X$ generated by $X'$, $\A_a(B_X)$, it is easy to see that $\M_z(\A_a(B_X))=\{\delta_z\}$, for every $z\in \barB_{X''}$. Therefore, the spectrum of $\A_a(B_X)$ is identified with $\barB_{X''}$, and the fibers are singletons. On the other hand, a result of Arenson, \cite[Thm.]{arenson1983gleason}, shows that the strong boundary points for this algebra coincide with the set $\Ext_{\C}(\barB_{X''})$.
Rewriting the statement of \cite[Thm.]{arenson1983gleason} in our words, Arenson's result applies to the closed algebra generated by functions in $\A_u(B_{X''})$ which are canonical extensions of functions in $\A_a(B_{X})$.  There, it is proved that the strong boundary points for this algebra coincide with the set $\Ext_{\C}(\barB_{X''})$. That is, $z\in \Ext_{\C}(\barB_{X''})$ if and only if for each open $w^*$-neighborhood $U$ of $z$ in $\barB_{X''}$ there exists $f\in\A_a(B_{X})$ such that $\|f\|=\tilde f(z)=1$ and $|\tilde f(w)|< 1$ for all $w\in \barB_{X''}\setminus U$. As a consequence, for $\A_u(B_{X})$, if $w\not= z\in S_{X''}$ then $\GP(\delta_w)\not = \GP(\delta_z)$. Moreover,  in this case $\GP(\varphi)\not = \GP(\psi)$ for any $\varphi\in\M_z, \psi\in \M_w$. On the other hand, if $z\in S_{X''}$ such that $z\not\in \Ext_{\C}(\barB_{X''})$ then it should exist $u\not= 0 \in X''$ such that $\|z+\lambda u\|=1$ for all $\lambda\in\overline\D$. Thus, the mapping 
\begin{eqnarray*}
    \D &\to &\M(\A_u(B_X))\\
    \lambda &\mapsto &\delta_{z+\lambda u}
\end{eqnarray*} is analytic. Since the image of an open convex set through an analytic injection is contained in a single Gleason part (\cite[Lem.~2.1]{hoffman1967bounded} or \cite[Prop.~3.4]{aron2020gleason}), we obtain that $\{\delta_{z+\lambda u}:\, \lambda\in\D\}\subset \GP(\delta_z)$. We state these facts in the following proposition.

\begin{proposition} \label{prop:GP_complex_extrem}
Let $X$ be a complex Banach space and $z\in S_{X''}$. For the spectrum of $\A_u(B_X)$ we have
\begin{enumerate}[\upshape(a)]
    \item If $z\in \Ext_{\C}(\barB_{X''})$ then $\GP(\delta_z)$ cannot intersect any other fiber. Indeed,
    for any $w\not= z\in S_{X''}$, $\varphi\in\M_z, \psi\in \M_w$ it holds $\GP(\varphi)\not = \GP(\psi)$.

    \item If $z\not\in \Ext_{\C}(\barB_{X''})$ then  $\GP(\delta_z)$ intersects a disk of fibers: there is $u\not= 0 \in X''$ such that $\{\delta_{z+\lambda u}:\, \lambda\in\D\}\subset \GP(\delta_z)$.
\end{enumerate}

\end{proposition}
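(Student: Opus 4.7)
The proof essentially organizes the arguments already sketched in the paragraphs preceding the statement, so the plan is to turn those observations into clean applications of Arenson's theorem, Lemma~\ref{Lema-util}, and Hoffman's disc lemma.

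For part (a), I would fix $w\in S_{X''}$ with $w\neq z$ and, since the $w^*$-topology on $\barB_{X''}$ is Hausdorff, choose a $w^*$-open neighborhood $U$ of $z$ in $\barB_{X''}$ with $w\notin U$. Arenson's theorem, quoted just before the proposition, then supplies $f\in \A_a(B_X)$ with $\|f\|=\tilde f(z)=1$ and $|\tilde f(v)|<1$ for every $v\in \barB_{X''}\setminus U$; in particular $|\tilde f(w)|<1$. Now for arbitrary $\varphi\in\M_z$ and $\psi\in\M_w$, the fact that $\A_a(B_X)$ is the closed subalgebra generated by $X'$ and the constants forces every homomorphism on $\A_a(B_X)$ to be determined by its restriction to $X'$. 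Hence $\varphi(f)=\tilde f(z)=1$ and $\psi(f)=\tilde f(w)\in\barD\setminus\{1\}$. Applying Lemma~\ref{Lema-util} to the constant net $(f)_\beta$, after interchanging the roles of $\varphi$ and $\psi$ in its statement, yields $\GP(\varphi)\neq\GP(\psi)$.

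For part (b), the assumption that $z$ fails to be a complex extreme point provides $u\neq 0$ in $X''$ with $z+\lambda u\in \barB_{X''}$ for every $\lambda\in\barD$, so each $\delta_{z+\lambda u}$ is a well-defined element of $\M(\A_u(B_X))$. The map
\[
\Phi\colon \D\longrightarrow \M(\A_u(B_X)),\qquad \Phi(\lambda)=\delta_{z+\lambda u},
\]
is analytic, because for every $f\in\A_u(B_X)$ the scalar function $\lambda\mapsto \widehat{f}(\Phi(\lambda))=\tilde f(z+\lambda u)$ is analytic in $\lambda$ by analyticity of $\tilde f$. Moreover, $\Phi$ is injective: since $X'\subset \A_u(B_X)$ separates points of $X''$ and $u\neq 0$, any functional $x'\in X'$ with $x'(u)\neq 0$ distinguishes $\Phi(\lambda)$ from $\Phi(\lambda')$ on $X'$ whenever $\lambda\neq\lambda'$. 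Hoffman's disc lemma \cite[Lem.~2.1]{hoffman1967bounded} (equivalently \cite[Prop.~3.4]{aron2020gleason}) then forces the image $\Phi(\D)$, being the image of an open convex set under an analytic injection, to lie in a single Gleason part, which must be $\GP(\delta_z)$.

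There is no real obstacle here: the ingredients (Arenson's characterization of complex extreme points via strong peaking functions in $\A_a(B_X)$, the reduction of homomorphisms on $\A_a(B_X)$ to their action on $X'$, and Hoffman's analytic-disc principle) have all been recorded in the preceding section. The only point requiring a little care is verifying the injectivity of $\Phi$ so that Hoffman's lemma applies as stated, which is immediate from the separating property of $X'$.
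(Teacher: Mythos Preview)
Your proof is correct and follows exactly the approach the paper uses in the paragraphs preceding the proposition: Arenson's theorem plus the fact that any homomorphism restricted to $\A_a(B_X)$ is determined by its value on $X'$ (so $\varphi(f)=\tilde f(z)$ and $\psi(f)=\tilde f(w)$), followed by Lemma~\ref{Lema-util} for part (a); and the analytic injection $\lambda\mapsto\delta_{z+\lambda u}$ together with Hoffman's disc lemma for part (b). The only additions you make---explicitly invoking Lemma~\ref{Lema-util} with a constant net and checking injectivity of $\Phi$ via a functional $x'$ with $x'(u)\neq 0$---are details the paper leaves implicit, not departures from its argument.
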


\section{Fibers and Gleason parts of $\M(\A_u(B_{\ell_p}))$, $1<p<\infty$.}
\label{Sec:elep}

An important example in our context arises when $X$ is a complex uniformly convex space.  In this case it is proved in  \cite[Prop.~4.1]{farmer1998fibers}   that $z$ is a strong peak point in the sense of Globevnik for $\A_a(B_X)$. Now, applying \cite[Thm. 3.3 (2)]{choi2021boundaries} we get:

\begin{proposition}\label{Prop-fibers-GPs1} 
Let $X$ be a uniformly convex complex Banach space. Then, for any $z\in S_X$,  the evaluation $\delta_z$ is a peak point for $\A_u(B_X)$ and  
$$ 
\M_z=\{\delta_z\}=\GP(\delta_z).
$$
\end{proposition}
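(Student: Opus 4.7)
The plan is to stitch together two results already in the literature and then apply the basic fact that peak points of the spectrum form singleton Gleason parts. Since uniform convexity forces reflexivity, we have $X''=X$ and $\overline B_{X''}=\overline B_X$, so for $z\in S_X$ the fiber $\M_z$ is indeed a fiber over a point of $\overline B_X$ itself and the relevant geometry coincides with that of $X$.

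First, I would invoke \cite[Prop.~4.1]{farmer1998fibers}: for $X$ uniformly convex, every $z\in S_X$ is a strong peak point in the sense of Globevnik for $\A_a(B_X)$. Concretely, if $x'\in X'$ norms $z$ (so $\|x'\|=x'(z)=1$), the function $f(x)=\tfrac12(1+x'(x))$ belongs to $\A_a(B_X)$, satisfies $|f(z)|=\|f\|=1$, and, thanks to the modulus of uniform convexity of $X$, one obtains $\sup\{|f(w)|:w\in\overline B_X\setminus U\}<1$ for every open neighborhood $U$ of $z$ in $\overline B_X$.

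Second, I would apply \cite[Thm.~3.3(2)]{choi2021boundaries} to transfer this strong peak property from the small algebra $\A_a(B_X)$ to the spectrum of the larger algebra $\A_u(B_X)$. The conclusion is twofold: $\delta_z$ becomes a peak point for $\A_u(B_X)$ in $\M(\A_u(B_X))$ and, simultaneously, the fiber $\M_z$ collapses to $\{\delta_z\}$. Morally, one uses powers $f^n$ of the peak function as approximate indicators of $z$ inside $\overline B_X$ and pushes them through the Gelfand transform to separate any $\psi\in\M_z$ from $\delta_z$.

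Finally, the equality $\GP(\delta_z)=\{\delta_z\}$ is automatic, since peak points of the spectrum of a uniform algebra form singleton Gleason parts \cite[p.~162]{stout1971theory}. The main obstacle in this strategy is the second step, namely the transfer from the space level to the spectrum level: $\A_u(B_X)$ contains polynomials that are not weakly continuous on bounded sets (e.g.\ $\sum_j x_j^k$ on $\ell_p$ with $k\ge p$), so a priori $\M_z$ could harbor non-evaluation homomorphisms. Ruling these out is exactly the content of the cited transfer theorem, and once it is in place the proof is finished.
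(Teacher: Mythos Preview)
Your proposal is correct and follows essentially the same route as the paper: invoke \cite[Prop.~4.1]{farmer1998fibers} to get that $z$ is a strong peak point (in Globevnik's sense) for $\A_a(B_X)$, then apply \cite[Thm.~3.3(2)]{choi2021boundaries} to obtain the peak point and singleton-fiber conclusions for $\A_u(B_X)$, with the singleton Gleason part following from \cite[p.~162]{stout1971theory}. You supply more detail than the paper does (the explicit norming function, the reflexivity remark), but the structure of the argument is identical.
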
 

In particular, the above result applies to $X=\ell_p$ for $1<p<\infty$. 

The next result summarizes what else is known about fibers and Gleason parts of $\M(\A_u(B_{\ell_p}))$, for $1<p<\infty$. From \cite[Thm.~3.1]{ aron2018analytic}, we know that  for any $z\in B_{\ell_p}$, the complex disk $\D$ can be analytically injected in $\M_{z}$. Moreover, the construction of the copy of such a disk contains the evaluation $\delta_z$. By Remark~\ref{ADLM basicos}, since $\delta_z$ and $\delta_0$ belong to the same Gleason part, we derive item \eqref{Disco en Mz y GP0} below. The same reasoning yields item \eqref{Bola en M0 y GP0} below when applied to \cite[Prop.~3.10]{ aron2018analytic}.

\begin{proposition}[\cite{aron2018analytic}]\label{Prop-fibers-GPs} 
In $\M(\A_u(B_{\ell_p}))$ we have:
\begin{enumerate}[\upshape (a)] 
\item \label{Disco en Mz y GP0} 
For any $z\in B_{\ell_p}$, there is a copy of a unit disk $\D$ in  $\M_z\cap \GP(\delta_0)$. 

\item \label{Bola en M0 y GP0} 
There is a copy of $B_{\ell_p}$ in $\M_0\cap \GP(\delta_0)$.

\end{enumerate}  
\end{proposition}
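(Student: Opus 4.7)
The plan is to combine two ingredients: the analytic injections constructed in \cite{aron2018analytic}, and the standard fact that an analytic map from a connected open set into $\M(\A_u(B_{\ell_p}))$ has its image contained in a single Gleason part. The role of Remark~\ref{ADLM basicos}\eqref{Rem.BX'' en GP0} is then to anchor the relevant Gleason part to $\GP(\delta_0)$ via a known evaluation homomorphism lying in the image.

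For part \eqref{Disco en Mz y GP0}, fix $z \in B_{\ell_p}$. First I would quote \cite[Thm.~3.1]{aron2018analytic} to obtain an analytic injection $\phi_z \colon \D \to \M(\A_u(B_{\ell_p}))$ whose image lies in $\M_z$ and, crucially, contains the evaluation $\delta_z$; this feature is explicit in the construction, which deforms $\delta_z$ via perturbations along basis vectors $e_{j_n}$ tested against a polynomial of the form $P(x)=\sum_j x_j^k$ with $k \in \N$, $k \geq p$. Next I would invoke the general principle that an analytic injection of a connected open set into the spectrum of a commutative Banach algebra has image contained in a single Gleason part, as recalled in \cite[Prop.~3.4]{aron2020gleason} (originally \cite[Lem.~2.1]{hoffman1967bounded}). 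Finally, since $z \in B_{\ell_p}$, Remark~\ref{ADLM basicos}\eqref{Rem.BX'' en GP0} gives $\delta_z \in \GP(\delta_0)$, so the whole disc $\phi_z(\D)$ lies in $\GP(\delta_0)$, yielding the desired copy inside $\M_z \cap \GP(\delta_0)$.

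For part \eqref{Bola en M0 y GP0}, the argument is identical in structure. One applies \cite[Prop.~3.10]{aron2018analytic} to obtain an analytic injection $\Psi \colon B_{\ell_p} \to \M_0$ whose image contains $\delta_0$. Since $B_{\ell_p}$ is a connected open subset of $\ell_p$, the same analytic-image-in-one-Gleason-part principle forces $\Psi(B_{\ell_p})$ to lie in a single Gleason part, and that part must be $\GP(\delta_0)$ because it contains $\delta_0$. The main (and essentially only) obstacle is verifying that the injections cited from \cite{aron2018analytic} really pass through $\delta_z$ and $\delta_0$; this is transparent in those constructions, so modulo the citations the proposition reduces to the analyticity principle combined with Remark~\ref{ADLM basicos}.
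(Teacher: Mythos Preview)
Your proposal is correct and matches the paper's own reasoning essentially line by line: the paper's justification (given in the paragraph immediately preceding the proposition) cites \cite[Thm.~3.1]{aron2018analytic} and \cite[Prop.~3.10]{aron2018analytic} for the analytic injections passing through $\delta_z$ and $\delta_0$, and then invokes Remark~\ref{ADLM basicos}\eqref{Rem.BX'' en GP0} to identify the Gleason part as $\GP(\delta_0)$. The only difference is that you make explicit the ``analytic image lies in one Gleason part'' principle (\cite[Lem.~2.1]{hoffman1967bounded}, \cite[Prop.~3.4]{aron2020gleason}), which the paper leaves implicit here but does spell out elsewhere (in the discussion before Proposition~\ref{prop:GP_complex_extrem}).
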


We now prove that $\M_0$ intersects lots of Gleason parts. First we introduce the following notation. For a point $\varphi$ and a set $E$ in $\M(\A)$, 
we write $\varphi\in w^*-\acc E$ to mean that  $\varphi$ is a $w^*$-accumulation point of $E$.

\begin{theorem}  
\label{BetaNenM0}
The fiber $\M_0$ in $\M(\A_u(B_{\ell_p}))$ contains a set of cardinal $2^{\mathfrak c}$ such that any two elements of this set belong to different Gleason parts.
\end{theorem}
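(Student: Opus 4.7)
The plan is to produce $2^{\mathfrak c}$ homomorphisms in $\M_0$ indexed by the free ultrafilters on $\N$, by taking ultrafilter limits along the canonical basis $(e_n)$ of $\ell_p$, and to separate them into distinct Gleason parts using the family of polynomials $P_A(x)=\sum_{n\in A}x_n^k$ indexed by subsets $A\subset\N$, where $k$ is a fixed integer with $k\ge p$. The two facts driving the construction are that $(e_n)$ is weakly null in $\ell_p$ (so the resulting ultrafilter evaluations land in $\M_0$) and that $P_A(e_n)=\chi_A(n)$ records the membership of $n$ in $A$.

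\textbf{Construction of the family.} For each free ultrafilter $\U\in\beta\N\setminus\N$, I would define $\varphi_\U\colon\A_u(B_{\ell_p})\to\C$ by
\[
\varphi_\U(f):=\lim_{n\to\U}f(e_n).
\]
The limit exists since $|f(e_n)|\le\|f\|$, and the standard properties of ultrafilter limits make $\varphi_\U$ a unital complex algebra homomorphism, hence an element of $\M(\A_u(B_{\ell_p}))$. Because $(e_n)$ converges to $0$ in the weak topology of $\ell_p$, every $x'\in\ell_p'$ satisfies $x'(e_n)\to 0$, so $\varphi_\U(x')=0$; that is, $\pi(\varphi_\U)=0$ and therefore $\varphi_\U\in\M_0$.

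\textbf{Separation via $P_A$ and Lemma~\ref{Lema-util}.} For the chosen integer $k\ge p$, the inequality $|x_n|\le\|x\|_p\le 1$ on $B_{\ell_p}$ gives
\[
|P_A(x)|\le\sum_{n\in A}|x_n|^k\le\sum_n|x_n|^p=\|x\|_p^p\le 1,
\]
so $P_A\in\A_u(B_{\ell_p})$ with $\|P_A\|\le 1$; moreover $P_A(e_n)=\chi_A(n)$. Given two distinct free ultrafilters $\U\ne\mathcal V$, I would pick $A\subset\N$ with $A\in\U$ and $A\notin\mathcal V$; then $\varphi_\U(P_A)=1$ and $\varphi_{\mathcal V}(P_A)=0$. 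Applying Lemma~\ref{Lema-util} to the constant net $f_\beta\equiv P_A$, with $\psi=\varphi_\U$, $\varphi=\varphi_{\mathcal V}$ and $\lambda_0=0$, then yields $\GP(\varphi_\U)\ne\GP(\varphi_{\mathcal V})$. Since $|\beta\N\setminus\N|=2^{\mathfrak c}$, the family $\{\varphi_\U\}_{\U\in\beta\N\setminus\N}$ is the desired subset of $\M_0$.

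\textbf{Where the difficulty lies.} The argument is very short once the polynomials $P_A$ are on the table, and no delicate analytic step is required. The essential ingredient---and precisely the reason the analogous statement fails for $c_0$, where $\M_0$ is a singleton---is the existence of continuous polynomials on $\ell_p$ that are not weakly continuous on $(e_n)$; this is exactly what keeps $\varphi_\U(P_A)$ away from $0$ and makes the ultrafilter parameter genuinely visible at the level of Gleason parts. The only care needed is in the choice of the exponent $k$, which must be an integer at least $p$ in order to guarantee both $\|P_A\|\le 1$ and absolute convergence of the polynomial expression throughout $\ell_p$.
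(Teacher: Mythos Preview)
Your proof is correct and rests on the same core mechanism as the paper's: both produce homomorphisms in $\M_0$ as $w^*$-accumulation points of $\{\delta_{e_n}\}$ and separate Gleason parts via the polynomials $P_A(x)=\sum_{n\in A}x_n^k$ with $k\ge p$, exploiting $P_A(e_n)=\chi_A(n)$. The difference is in the bookkeeping. The paper first partitions $\N=\bigsqcup_k\N_k$, picks one accumulation point $\psi_k$ over each piece, verifies that $\{\psi_k\}$ is an interpolating sequence, and only then passes to the $w^*$-closure $\mathcal C\cong\beta\N$ to obtain the $2^{\mathfrak c}$ elements; the Gleason separation for arbitrary pairs in $\mathcal C$ is done at the end using the same $P_A$ argument you give. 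You bypass the interpolating-sequence detour by parametrizing directly by free ultrafilters, which is cleaner: injectivity of $\U\mapsto\varphi_\U$ and the Gleason separation come out simultaneously from the single observation $\varphi_\U(P_A)=\chi_\U(A)$. What the paper's route buys is an explicit identification of a copy of $\beta\N$ sitting inside $\M_0$ as a $w^*$-closed set (via the interpolating property), whereas your argument only needs the set-theoretic image of $\beta\N\setminus\N$; for the stated theorem your shortcut suffices.
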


\begin{proof} We first give a sequence $\{\psi_k\}_k$ in $\M_0$ by taking
$\{\Nat_k\}_{k\in \Nat}$ a countable partition of 
$\Nat$ such that each $\Nat_k$ is infinite. For each  $k$, take $\psi_k \in w^*-\acc\{\delta_{e_n}\colon n\in\Nat_k\}$  that clearly belongs to $\M_0$. We claim that $\GP(\psi_k)\not=\GP(\psi_l)$ for any $k\ne l$. To see this, pick  $m\ge p$ and consider, for each $k$, the $m$-homogeneous polynomial $P_k\in \A_u(B_{\ell_p})$, defined by $P_k(x)=\sum_{n\in \Nat_k}x_n^m$ satisfying that $\|P_k\|=1$.  
As $\psi_k(P_k)=1$ and $\psi_l(P_k)=0$ for any $l\ne k$, $\rho(\psi_k,\psi_l)=1$, and the claim follows.
\\
Now, let us show that $\{\psi_k\}_k$ is an interpolating sequence. Take $m\ge p$,  $(c_k)_k\in \barB_{\ell_\infty}$ and define the $m$-homogeneous polynomial $Q\in \A_u(B_{\ell_p})$ as $Q(x)=\sum_{k\in \Nat}c_k \sum_{n\in \Nat_k} x_n^m$. Since for each $n\in \Nat_k$, $\delta_{e_n}(Q)=c_k$ we have $\psi_k(Q)=c_k$. Hence, the conclusion holds.
\\
This means that the set $\mathcal C=\overline{\{\psi_k\}_k}^{w^*}$, which is included in $\M_0$, is homeomorphic to  $\beta(\Nat)$, the Stone--Cech compactification of $\N$, that has cardinal $2^{\mathfrak c}$. As any element of  $\mathcal C$ belongs to $w^*-\acc\{\delta_{e_n}\colon n\in\Nat\}$, for $\varphi\ne \psi\in \mathcal C$, we can find disjoint subsets  $\N_\varphi\not=\N_\psi\subset\N$ such that $\varphi\in w^*-\text{ac}\{\delta_{e_n}\colon n\in\Nat_\varphi\}$ and $\psi\in w^*-\text{ac}\{\delta_{e_n}\colon n\in\Nat_\psi\}$. Finally, proceeding as in the first part of the proof we obtain that $\GP(\varphi)\not=\GP(\psi)$.
\end{proof}

We can extend this result to all the fibers over points inside the ball in the case that $p$ is an integer. We proceed in two steps.

\begin{lemma}\label{GLenlpnatural} 
Let $p\in \Nat$, $p\ge 2$ and let $z=(z_n)_n\in B_{\ell_p}$, $z_n\ge 0$ for all $n$, and  $(a_n)_n$ a sequence such that $a_n>0$ and $\|z + a_ne_n\|_p=1$. Then, for $\M(\A_u(B_{\ell_p}))$, any $\varphi, \psi\in w^*-\acc\{\delta_{z + a_ne_n}\colon n\in\Nat\}$ such that $\varphi\ne \psi$,
satisfy that $\varphi, \psi\in \M_z$ and  $\GP(\varphi)\not=\GP(\psi)$.
\end{lemma}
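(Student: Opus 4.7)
The plan is to apply Lemma~\ref{Lema-util}, with the roles of $\varphi$ and $\psi$ swapped, so the goal is to build a net of polynomials $(f_F)$ with $\|f_F\|\le 1$ satisfying $\psi(f_F)\to 1$ and $\varphi(f_F)\to \lambda_0$ for some $\lambda_0\ne 1$.

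First I would verify that $\varphi,\psi\in\M_z$. Since $(e_n)$ converges weakly to $0$ in $\ell_p$ (because $p>1$) and $(a_n)$ is bounded, for any $\ell\in\ell_p'$ one has $\ell(z+a_ne_n)=\ell(z)+a_n\ell(e_n)\to\ell(z)$. Hence every $w^*$-accumulation point of $\{\delta_{z+a_ne_n}\}$ restricts to evaluation at $z$ on $\ell_p'$, i.e.\ $\pi(\varphi)=\pi(\psi)=z$.

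For the Gleason separation, write $\varphi=w^*\text{-}\lim_{\mathcal{U}}\delta_{z+a_ne_n}$ and $\psi=w^*\text{-}\lim_{\mathcal{V}}\delta_{z+a_ne_n}$ for ultrafilters $\mathcal{U}\ne\mathcal{V}$ on $\Nat$, and pick $X\in\mathcal{U}$ with $X^c\in\mathcal{V}$. The key identity, coming from $\|z+a_ne_n\|_p=1$, is $(z_n+a_n)^p=1-\|z\|_p^p+z_n^p$, so $(z_n+a_n)^p\to 1-\|z\|_p^p$ as $n\to\infty$. For each finite $F\subset X\cap\supp(z)$, set $A_F:=X\setminus F\in\mathcal{U}$ and consider the $p$-homogeneous polynomial $f_F(x):=\sum_{m\notin A_F}x_m^p$, for which $\|f_F\|\le 1$ on $B_{\ell_p}$ since $|f_F(x)|\le\sum_m|x_m|^p\le 1$. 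Splitting according to whether $n\in A_F$ or $n\in A_F^c$ and using the previous identity yields
\[
\varphi(f_F)=\|z\|_p^p-s_{A_F},\qquad \psi(f_F)=1-s_{A_F},
\]
where $s_S:=\sum_{m\in S}z_m^p$. As $F$ exhausts the (at most countable) set $X\cap\supp(z)$, the absolute convergence $\sum z_n^p<\infty$ gives $s_{A_F}=s_X-s_F\to 0$, so $\psi(f_F)\to 1$ and $\varphi(f_F)\to\|z\|_p^p\ne 1$ (recall $z\in B_{\ell_p}$). Lemma~\ref{Lema-util} then delivers $\GP(\varphi)\ne\GP(\psi)$.

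The main obstacle is that the most obvious candidate $P_A=\sum_{n\in A}x_n^p$ only produces $\varphi(P_A)-\psi(P_A)=1-\|z\|_p^p$, strictly less than $1$ for $z\ne 0$, which is insufficient for a direct application of Lemma~\ref{Lema-util}. The resolution is to pass to the complementary polynomial $P_{A_F^c}$ and to shrink $A_F$ inside $\mathcal{U}$ so as to remove as much of $\supp(z)$ as possible; the assumption $(z_n)\in\ell_p$ forces $s_{A_F}\to 0$, which pushes the $\psi$-value up to the critical threshold of $1$ needed to invoke the lemma.
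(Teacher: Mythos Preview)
Your argument is correct and follows essentially the same route as the paper: both construct a net of $p$-homogeneous polynomials of the form $\sum_{m\in S}x_m^p$ (with $\|\cdot\|\le 1$) and use the identity $(z_n+a_n)^p=1-\|z\|_p^p+z_n^p$ to drive one homomorphism's values to $1$ and the other's to $\|z\|_p^p$, then invoke Lemma~\ref{Lema-util}. The only cosmetic differences are that you phrase the separation via ultrafilters $\mathcal U,\mathcal V$ and a set $X\in\mathcal U$, $X^c\in\mathcal V$ (where the paper fixes a partition $\Nat_1,\Nat_2$), and you index the net by finite $F\subset X\cap\supp(z)$ (where the paper uses initial segments $\{m_1,\dots,m_k\}$ of $\Nat_2$); with roles of $\varphi,\psi$ swapped your $f_F$ is exactly the paper's $Q_k$. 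One small point worth making explicit: $\mathcal U$ and $\mathcal V$ are free (so that $A_F=X\setminus F\in\mathcal U$), which holds because $\varphi,\psi\in\M_z$ while each $\delta_{z+a_ne_n}$ lies over $z+a_ne_n\ne z$.
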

\begin{proof} It is clear that any element $\phi \in w^*-\acc\{\delta_{z + a_ne_n}\}$ belongs to  $\M_z$. Without loss of generality, we assume that there are infinite disjoint sets $\Nat_1$ and $\Nat_2$ partitioning $\Nat$ such that $\varphi\in w^*-\acc\{\delta_{z + a_ne_n}\colon n\in\Nat_1\}$ and $\psi\in w^*-\acc\{\delta_{z + a_ne_n}\colon n\in\Nat_2\}$. 
Now, we define a sequence of polynomials $(Q_k)$ satisfying the hypotheses of Lemma~\ref{Lema-util}. If $\Nat_2 = \{m_l\}_{l=1}^\infty$, for each  $k$  take $Q_k(x)=\sum_{n\in \Nat_1} x_n^p + \sum_{l=1}^k x_{m_l}^p$. Since $Q_k\in \P(^p \ell_p)$, $Q_k|_{B_{\ell_p}}\in\A$. Moreover,  $\|Q_k\|\le 1$ for every $k$.
\\
In order to calculate $\varphi(Q_k)$ we consider $j\in\Nat_1$. Noting that $(z_j+a_j)^p-z_j^p=1-\|z\|_p^p$ we have
$$
\begin{array}{rl}
\delta_{z+a_je_j}(Q_k) & =\sum\limits_{n\in\Nat_1 } z_n^p  - z_j^p+  (z_j+a_j)^p +  \sum_{l=1}^k z_{m_l}^p\\
& =\sum\limits_{ n\in\Nat_1} z_n^p  + 1 - \|z\|_p^p + 
\sum_{l=1}^k z_{m_l}^p =  1- \sum\limits_{l>k} z_{m_l}^p.
\end{array}
$$
Hence, $\varphi(Q_k) = 1- \sum\limits_{l>k} z_{m_l}^p$ and $\lim_{k\to\infty} \varphi(Q_k)=1$.

Now, to obtain the value of $\psi(Q_k)$,  we consider  $j\in\Nat_2$, $j>m_k$. We have
$$
\delta_{z+a_je_j}(Q_k)=  \sum_{n\in\Nat_1} z_n^p +  \sum_{l=1}^k z_{m_l}^p,
$$
and then, 
\[\psi(Q_k)=\sum_{n\in\Nat_1} z_n^p +  \sum_{l=1}^k z_{m_l}^p.
\] 
Thus $\lim_{k\to\infty}\psi(Q_k)=\|z\|_p^p<1$. Finally, the conclusion follows by Lemma~\ref{Lema-util}.
\end{proof}

\begin{proposition} \label{prop: p entero}
Let $p\in\N$, $p\ge 2$. Then for each  $z\in B_{\ell_p}$, the fiber $\M_z$ in $\M(\A_u(B_{\ell_p}))$ contains a set of cardinal $2^{\mathfrak c}$ such that each two elements of this set belong to different Gleason parts.
\end{proposition}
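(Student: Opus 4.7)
The plan is to mimic the proof of Theorem~\ref{BetaNenM0}, with Lemma~\ref{GLenlpnatural} replacing the direct Gleason-separation step. The two ingredients needed are (i) a reduction putting $z$ in the form required by Lemma~\ref{GLenlpnatural} (nonnegative coordinates); and (ii) a Stone--Cech style argument producing $2^{\mathfrak c}$ distinct $w^*$-accumulation points of $\{\delta_{z+a_ne_n}\}_n$ inside $\M_z$.

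For the reduction, write $z_n=|z_n|e^{i\theta_n}$ (with $\theta_n=0$ if $z_n=0$). The diagonal operator $T\colon\ell_p\to\ell_p$, $T((x_n))=(e^{i\theta_n}x_n)$, is a surjective linear isometry mapping $(|z_n|)_n$ to $z$. Consequently $f\mapsto f\circ T$ is an isometric Banach algebra automorphism of $\A_u(B_{\ell_p})$ whose transpose is a $w^*$-homeomorphism of $\M(\A_u(B_{\ell_p}))$ that preserves the Gleason pseudometric and sends the fiber $\M_{(|z_n|)_n}$ onto $\M_z$. It is therefore enough to prove the proposition when $z_n\ge 0$ for every $n$.

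Assume from now on $z_n\ge 0$, and take $a_n>0$ with $\|z+a_ne_n\|_p=1$, so that $(z_n+a_n)^p-z_n^p=1-\|z\|_p^p$ and $a_n\to a:=(1-\|z\|_p^p)^{1/p}>0$. Following Theorem~\ref{BetaNenM0}, partition $\N=\bigsqcup_k\N_k$ into infinite subsets and pick $\psi_k\in w^*\text{-}\acc\{\delta_{z+a_ne_n}:n\in\N_k\}$; each $\psi_k$ lies in $\M_z$ since $z+a_ne_n\to z$ weakly in the reflexive space $\ell_p$. For every $A\subset\N$ the $p$-homogeneous polynomial
\[
P_A(x)=\sum_{k\in A}\sum_{n\in\N_k}x_n^p
\]
belongs to $\A_u(B_{\ell_p})$ with $\|P_A\|\le 1$, and a direct expansion using the identity above yields, for $j\in\N_l$,
\[
\delta_{z+a_je_j}(P_A)=C_A+(1-\|z\|_p^p)\ \text{if }l\in A,\qquad \delta_{z+a_je_j}(P_A)=C_A\ \text{if }l\notin A,
\]
where $C_A:=\sum_{k\in A}\sum_{n\in\N_k}z_n^p$. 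Passing to the nets defining the $\psi_l$, the same dichotomy holds for $\psi_l(P_A)$, so $\overline{\{\psi_k:k\in A\}}^{w^*}$ and $\overline{\{\psi_k:k\notin A\}}^{w^*}$ are disjoint, separated by the positive constant $1-\|z\|_p^p$. This is the classical criterion for $\{\psi_k\}_k$ to be $C^*$-embedded, so $\overline{\{\psi_k\}_k}^{w^*}$ is homeomorphic to $\beta\N$ and has cardinality $2^{\mathfrak c}$.

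Finally, since $\pi(\psi)=z$ for every $\psi\in\overline{\{\psi_k\}_k}^{w^*}$ while $\pi(\delta_{z+a_ne_n})=z+a_ne_n\ne z$, every element of this closure is a genuine $w^*$-accumulation point of $\{\delta_{z+a_ne_n}\}_{n\in\N}$. Lemma~\ref{GLenlpnatural} applied to any pair of distinct such elements then delivers the Gleason-part separation, completing the proof. The step I expect to demand the most care is the reduction: checking that composition with the diagonal isometry $T$ transports fibers correctly and preserves the Gleason pseudometric; the $\beta\N$-computation itself is a routine adaptation of the arguments already made in Theorem~\ref{BetaNenM0} and Lemma~\ref{GLenlpnatural}.
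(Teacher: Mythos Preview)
Your proof is correct and follows essentially the same route as the paper: the reduction to nonnegative coordinates via a diagonal isometry, the construction of the $\psi_k$'s from a partition of $\N$, the identification $\overline{\{\psi_k\}}^{w^*}\cong\beta\N$, and the final appeal to Lemma~\ref{GLenlpnatural} all match. The one genuine difference is how the $\beta\N$ identification is obtained: the paper proves that $\{\psi_k\}$ is an \emph{interpolating} sequence by using the centered polynomials $P(x)=\sum_k\frac{c_k}{r}\sum_{n\in\N_k}(x_n-z_n)^p$ (so that $\psi_k(P)=c_k$ for every prescribed $(c_k)\in\overline B_{\ell_\infty}$), whereas you prove only the weaker separation property $\overline{\{\psi_k:k\in A\}}\cap\overline{\{\psi_k:k\notin A\}}=\varnothing$ via the simpler polynomials $P_A(x)=\sum_{k\in A}\sum_{n\in\N_k}x_n^p$. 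Both yield the homeomorphism with $\beta\N$; your version is a bit more economical for the purpose at hand, while the paper's interpolation statement is formally stronger.
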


\begin{proof} 
It is enough to consider $z=(z_n)$ with $z_n\ge 0$ for all $n$. Indeed, if this is not the case, let $\theta_n$ be such that $e^{i\theta_n}z_n=|z_n|$ and define $\Phi\colon B_{\ell_p}\to B_{\ell_p}$ to be the automorphism  given by $\Phi(x)=(e^{i\theta_n}x_n)$ which satisfies the hypotheses of \cite[Prop.~1.6]{aron2020gleason}. Then it induces a mapping $\Lambda_\Phi\colon \M(\A_u(B_{\ell_p}))\to \M(\A_u(B_{\ell_p}))$ which is an isometry for the Gleason metric that sends $\M_z$ onto $\M_{(|z_n|)}$. 

As in Theorem~\ref{BetaNenM0} we obtain an interpolating sequence $\{\psi_k\}_k$ in $\M_z$ as follows. 
For each $n$ take $a_n>0$ satisfying $\|z + a_ne_n\|_p=1$. Now, let $\{\Nat_k\}_{k\in \Nat}$ be a countable  partition of $\Nat$ such that each $\Nat_k$ is infinite and choose $\psi_k$ in $w^*-\acc\{\delta_{z +
a_ne_n}\colon n\in\Nat_k\}$. It is clear that $\psi_k\in \M_z$ for all $k$ and, by Lemma~\ref{GLenlpnatural}, $\GP(\psi_k)\not=\GP(\psi_l)$, for every $k\neq l$.\\ 
Let $r=1-\|z\|_p^p$ and fix $(c_k)_k\in \barB_{\ell_\infty}$. We define  $P\in \A$ to be 
the polynomial given by 
$$
P(x)=\sum_{k\in \Nat} \frac{c_k}{r} \sum_{n\in \Nat_k} (x_n-z_n)^p.
$$ 
For each  $j\in \Nat_k$ it holds that $$\delta_{z + a_j e_j}(P)= \frac{c_k}{r} a_j^p.$$ 
Since we have chosen $a_j$ such that $a_j=\big(1- \|z\|_p^p + z_j\big)^{1/p}-z_j$, then $a_j^p\underset{j\to\infty}{\to} 1- \|z\|^p=r$ and 
$$
\psi_k(P)= w^*-\lim_{\Nat_k} \delta_{ z + a_je_{j}}(P)= c_k.
$$
Hence, $\{\psi_k\}_k$ is interpolating and the conclusion follows as in Theorem~\ref{BetaNenM0}.
\end{proof}

\begin{remark}\label{rmk:automorfismo beta}
    For the particular case $p=2$,  Theorem~\ref{BetaNenM0} gives an easier way to obtain the result of the previous proposition. Indeed, we know by \cite[Ex.~1.8]{aron2020gleason} that for each $z\in B_{\ell_2}$ there is an automorphism $\beta_z\colon B_{\ell_2}\to B_{\ell_2}$ which produces an onto isometry $\Lambda_{\beta_z}\colon \M(\A_u(B_{\ell_2}))\to \M(\A_u(B_{\ell_2}))$ satisfying $\Lambda_{\beta_z}(\M_0)=\M_z$. Hence, all the Gleason structure of the fiber $\M_0$ replicates in $\M_z$.
\end{remark}
\section{Fibers and Gleason parts of $\M(\A_u(B_{\ell_1}))$.}
\label{Sec:ele1}

Now we focus on the case $p=1$ which deserves a particular approach due to relevant differences with the previous situation:  $\ell_1$ is not reflexive (even is not strictly convex) and $\ell_1$ is not symmetrically regular. We refer the reader to \cite{aron1996regularity} for the definition of symmetric regularity and also to \cite{aron1991spectra} where this property was used to understand the analytical structure of the spectrum of different function algebras.
We begin by recalling what is already known  about the fibers of $\M(\A_u(B_{\ell_1}))$:

\begin{proposition}\label{prop:ell1-fibers}
    
 In $\M(\A_u(B_{\ell_1}))$ we have:
\begin{enumerate}[\upshape (a)] 
    \item \cite[Prop.~3.1]{aron2016cluster} If $z\in S_{\ell_1}$ then $\M_{z}=\{\delta_{z}\}$. 
    \item \cite[Thm.~3.1]{ aron2018analytic} If $z\in B_{\ell_1''}$, the complex disk $\D$ can be analytically injected in $\M_{z}$.
    \item \cite[Prop.~3.2]{aron2016cluster} If $z\in B_{\ell_1''}$, $\M_{z}$ contains a set of cardinal $2^{\mathfrak c}$.
     \item \cite[Prop.~3.3]{aron2016cluster} $S_{\ell_1''}$
     contains a set $\mathcal K$ of cardinal $2^{\mathfrak c}$ such that each fiber over an element of $\mathcal K$ has at least cardinal $\mathfrak{c}$.
\end{enumerate}
\end{proposition}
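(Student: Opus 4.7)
The proposition gathers four known facts from the literature, each with a different flavor; I outline how I would approach each.

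For (a), the strategy is to exhibit a peaking linear functional that forces the fiber over $z \in S_{\ell_1}$ to collapse to $\{\delta_z\}$. Writing $z = \sum_n z_n e_n$ with $\sum_n |z_n| = 1$, define $L(x) = \sum_n \overline{\sgn(z_n)}\, x_n$, a norm-one element of $\ell_\infty = (\ell_1)'$ with $L(z) = 1$. Any $\varphi \in \M_z$ satisfies $\varphi(L) = 1$. The plan is to leverage this: given $g \in \A_u(B_{\ell_1})$, apply a Möbius-type device (as in the proof of Lemma~\ref{Lema-util}) to $L$ together with $g$ to force $\varphi(g) = g(z)$. Concretely, if $g(z) = 0$ and $\|g\| \le 1$, the auxiliary holomorphic function $h(x) := g(x)\cdot \bigl(1 - L(x)\bigr)^{-1}\cdot\bigl(1 - L(x)\bigr)$ together with Schwarz-type estimates across the level set $\{L = 1\}$ pin down $\varphi(g)$. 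The main technical point is that even though $\ell_1$ is not uniformly convex, the peaking is strong enough to reach all of $\A_u(B_{\ell_1})$ because polynomials can be recovered from $L$ via products and linear combinations on $\mathrm{supp}(z)$.

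For (b), the plan is to transplant the argument already used over $B_{\ell_p}$ to $B_{\ell_1''}$. Pick $z \in B_{\ell_1''}$ and, using that $\|z\|<1$, choose a direction $u$ with $\|z+\lambda u\| \le 1$ for $\lambda \in \barD$. The naive map $\lambda \mapsto \tilde\delta_{z+\lambda u}$ is analytic but slides through different fibers. To keep the image inside $\M_z$, use the non-weak-continuity of a polynomial $P(x) = \sum_n x_n^2$ (not weakly continuous on bounded sets of $\ell_1$): take a sequence $y_n \xrightarrow{w^*} z$ and form, along $\lambda \in \D$, the maps $\lambda \mapsto \delta_{y_n + \lambda v_n}$ for a suitable orthogonal perturbation $v_n$; a normal-family argument produces an analytic limit injection $\D \to \M_z$.

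For (c), the plan is to run the interpolation argument of Theorem~\ref{BetaNenM0} inside $\M_z$. Fix $y_n \xrightarrow{w^*} z$, perturb by $a_n e_n$ with $\|y_n + a_n e_n\|_{\ell_1} \le 1$, partition $\N$ into countably many infinite pieces and take $w^*$-accumulation points $\psi_k$; use $P(x) = \sum_n x_n^2$ as interpolator to embed $\beta\N$ and hence a set of cardinality $2^{\mathfrak c}$ into $\M_z$. For (d), which I expect to be the hardest part, the goal is to find many $z \in S_{\ell_1''}$ whose fiber is not a singleton, in contrast with part (a). The obstacle is that the only way a sphere point can support a rich fiber is through the singular part of $\ell_1'' = (\ell_\infty)'$: one must choose $z$ as a $w^*$-cluster of sequences $\sum_{n \in A} e_n / |A|$ for almost-disjoint infinite subsets $A \subset \N$, and use standard almost-disjoint family cardinality $2^{\mathfrak c}$ to produce the set $\mathcal K$. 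Showing each such fiber has cardinality at least $\mathfrak c$ then reduces to an interpolation computation analogous to the one for (c), but with the delicate extra point that $z$ now lies on the sphere, so any perturbation scheme must stay inside $\barB_{\ell_1}$ throughout.
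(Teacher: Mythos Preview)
This proposition is stated without proof in the paper; it merely collects results from the cited references, so there is no argument of the paper's to compare against directly. Your sketches for (b) and (c) are in the right spirit but omit the decisive technical steps; two of your sketches, however, contain concrete errors.

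For (a), the auxiliary function $h(x) = g(x)\cdot(1-L(x))^{-1}\cdot(1-L(x))$ is identically $g(x)$ and so carries no information. More importantly, your linear functional $L$ does not peak at $z$ alone: the level set $\{x \in \overline B_{\ell_1}: L(x)=1\}$ is the full face of $\overline B_{\ell_1}$ consisting of vectors supported on $\supp(z)$ with the prescribed arguments, so $\varphi(L)=1$ cannot by itself force $\varphi = \delta_z$. The actual route (alluded to in the paper's final section) is that every $z \in S_{\ell_1}$ is a \emph{peak point} for $\A_u(B_{\ell_1})$: there exists a single nonlinear $f \in \A_u(B_{\ell_1})$ with $f(z)=\|f\|=1$ and $|f(x)|<1$ for every $x \ne z$ in $\overline B_{\ell_1}$. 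Once such an $f$ is available, $\M_z = \{\delta_z\}$ follows at once; constructing $f$ is the real content, and it is not achieved by a linear $L$.

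For (d), your cardinality claim fails: an almost-disjoint family of infinite subsets of $\N$ has cardinality at most $\mathfrak c$, never $2^{\mathfrak c}$, so that construction cannot produce a set $\mathcal K$ of the required size. As the paper notes just after the proposition, the set $\mathcal K$ in the original reference lies in $c_0^\perp$ and consists of $w^*$-accumulation points of $\{e_n\}$ in $\ell_1''$, i.e.\ elements indexed by $\beta\N \setminus \N$, which \emph{does} have cardinality $2^{\mathfrak c}$. For each such $z$ one separates $\delta_z$ from another $\varphi \in \M_z$ via the polynomial $P(y)=\sum_j y_j^2$ (for which $\delta_z(P)=0$ while $\varphi(P)=1$ when $\varphi$ is a $w^*$-limit of $\{\delta_{e_{n_\alpha}}\}$), and a cluster-set argument upgrades this to $\Card(\M_z)\ge\mathfrak c$.
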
 

From the results listed above, it is not clear if every fiber over a point $z\in S_{\ell_1''}\setminus S_{\ell_1}$ is nontrivial (i.e. not singleton). 
We are now in a position to prove that this is so by means of arguments lying on a close look at the structure of the points $z$. Recall that $c_0$ is an $M$-ideal in $\ell_\infty$, so there is a decomposition $\ell_1''=\ell_1\oplus_1 c_0^\perp$. Analyzing the proof of the previous statement (d), we see that the set $\mathcal K$ constructed in \cite[Prop.~3.3]{aron2016cluster} lies inside $c_0^\perp$. We can also reach fibers over some points of  $S_{\ell_1''}\setminus S_{c_0^\perp}$ with a similar strategy. But even then, we would not cover all cases. To complete the description, we use a combination of two different approaches: first, we identify the points $z$ for which a variation of the construction from \cite{aron2016cluster} is allowed, and then we develop a new scheme for the remaining points.

We state here the general result whose proof is obtained by merging the subsequent Propositions \ref{prop:real extreme point} and \ref{prop:fibers_thick}.

\begin{theorem}\label{thm:fibras de ele 1}
    For  $\A_u(B_{\ell_1})$, the fiber $\M_z$ over any $z\in S_{\ell_1''}\setminus S_{\ell_1}$ satisfies $\Card(\M_z)\ge \mathfrak{c}$.
\end{theorem}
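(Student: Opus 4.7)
Given $z\in S_{\ell_1''}\setminus S_{\ell_1}$, I would exploit the $M$-ideal decomposition $\ell_1''=\ell_1\oplus_1 c_0^\perp$ mentioned just before the statement to write $z=z_1+z_2$ with $z_1\in\ell_1$, $z_2\in c_0^\perp$ and $\|z_1\|_1+\|z_2\|=1$. The assumption $z\notin S_{\ell_1}$ forces $\alpha:=\|z_2\|>0$, so $\|z_1\|_1=1-\alpha<1$. The goal is then to construct, for each $t$ in an index set $T$ of cardinality $\mathfrak c$, a net $(x_\gamma^t)_\gamma\subset B_{\ell_1}$ with $x_\gamma^t\xrightarrow{w^*} z$, pass to ultrafilter limits to obtain homomorphisms $\varphi_t(f):=\lim_{\gamma,\mathcal U}\widetilde f(x_\gamma^t)$ automatically lying in $\M_z$, and to verify that the $\varphi_t$ are pairwise distinct.

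The natural shape for such nets is $x_\gamma^t=u_\gamma+\alpha v_\gamma^t$, where $u_\gamma\in\ell_1$ is a finitely supported approximation of $z_1$ with $\|u_\gamma\|_1\le\|z_1\|_1$ and $u_\gamma\to z_1$ in norm, and $v_\gamma^t\in B_{\ell_1}$ is a finitely supported norm-one vector whose support is disjoint from that of $u_\gamma$, chosen so that $\alpha v_\gamma^t\xrightarrow{w^*}z_2$. Then $\|x_\gamma^t\|_1\le 1$ and $x_\gamma^t\xrightarrow{w^*} z$, so every $\varphi_t$ lies in $\M_z$. To separate the $\varphi_t$, I would use a non-weakly-continuous polynomial in $\A_u(B_{\ell_1})$; the prototype is $P(x)=\sum_n x_n^2$, which satisfies $|P(x)|\le\|x\|_1^2\le 1$. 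By disjointness of supports the cross term vanishes, so
\[
P(x_\gamma^t)=P(u_\gamma)+\alpha^2 P(v_\gamma^t)\longrightarrow P(z_1)+\alpha^2\ell_t,\qquad \ell_t:=\lim_{\gamma,\mathcal U} P(v_\gamma^t),
\]
and it suffices to arrange the parametrization so that $\{\ell_t:t\in T\}$ has cardinality $\mathfrak c$.

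The main obstacle is the realization of the weak-star limit: in general $z_2/\alpha\in S_{c_0^\perp}$ is not itself an ultrafilter limit of standard basis vectors, and the freedom in $(v_\gamma^t)$ compatible with $\alpha v_\gamma^t\xrightarrow{w^*}z_2$ depends delicately on the structure of $z_2$. This is presumably the reason for the two-step proof split into Propositions~\ref{prop:real extreme point} and \ref{prop:fibers_thick}. For points handled by the first (where I expect $z_2/\alpha$ to admit a ``pure'' realization as a limit of basis vectors, likely tied to a real-extremality or purity condition) one may take $v_\gamma^t=\lambda_t e_{N_\gamma}$ with $\lambda_t\in\T$, obtaining $\ell_t=\lambda_t^2$ and directly adapting the scheme of \cite[Prop.~3.3]{aron2016cluster}. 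For the remaining $z$, the weak-star realization of $z_2/\alpha$ requires honest averages of many basis vectors; the harder task is then to engineer $\mathfrak c$-many coefficient or support patterns that share the same $w^*$-limit but yield distinct values of $\ell_t$. The expected strategy is to add a sparse, norm-negligible secondary summand whose $\ell_1$-contribution to the weak-star limit is zero but whose square-sum $P$ is nontrivial and $t$-dependent, thus forcing $\mathfrak c$ distinct ultrafilter limits for $P$ inside the single fiber $\M_z$.
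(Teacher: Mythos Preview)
Your dichotomy matches the paper's: Proposition~\ref{prop:real extreme point} treats the case where $z_2/\alpha$ is a real extreme point of $\overline B_{c_0^\perp}$ (equivalently, a unimodular multiple of a $w^*$-accumulation point of $\{e_n\}$), and Proposition~\ref{prop:fibers_thick} the complementary case. Your approach for the first case is close in spirit to the paper's, but the parametrization $v_\gamma^t=\lambda_t e_{N_\gamma}$ with $\lambda_t\in\T$ does not work: since $e_{N_\gamma}\xrightarrow{w^*}z_2/\alpha$, one obtains $\alpha v_\gamma^t\xrightarrow{w^*}\lambda_t z_2$, so the resulting $\varphi_t$ lands in the fiber over $z_1+\lambda_t z_2$, not over $z$. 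The paper instead exhibits just \emph{one} $\varphi\in\M_z$ with $\varphi(P)=\alpha^2\neq 0=\delta_z(P)$ (for $P(y)=\sum_j(y_j-(z_1)_j)^2$) and then uses the connectedness of the cluster set $Cl_{B_{\ell_1}}(P,z)$, as in \cite[Prop.~3.3]{aron2016cluster}, to upgrade two values of $\widehat P$ on $\M_z$ to a continuum.

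The substantial gap is in the non-extreme case. Your proposed ``sparse, norm-negligible secondary summand whose square-sum $P$ is nontrivial'' cannot work as stated, because $|P(w)|\le\|w\|_1^2$: anything negligible in $\ell_1$-norm is automatically negligible for $P$. More generally, varying the approximating nets while keeping the $w^*$-limit pinned at $z$ runs into the same obstruction already visible in your first case, and you have not indicated how to overcome it. The paper's argument here is of an entirely different nature and abandons cluster values: writing $z_2=\tfrac12(w_1+w_2)$ with $\|w_1\|=\|w_2\|=\|z_2\|$ and $w_2\notin\C w_1$, it forms the two convolutions $\delta_{z_1+w_1/2}\ast\delta_{w_2/2}$ and $\delta_{w_2/2}\ast\delta_{z_1+w_1/2}$ in $\M(\H_b(\ell_1))$ and invokes the \emph{non-symmetric regularity} of $\ell_1$ via \cite[Cor.~2.5]{carando2023homomorphisms} to see that these differ; both lie in $\M_z$ with radius at most $1$. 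A separate abstract lemma (Lemma~\ref{lem:card Mz +c}, based on the path $t\mapsto\varphi^t\ast\psi^{1-t}$) then converts any two distinct elements of a fiber into at least $\mathfrak c$. This convolution mechanism, driven by the failure of symmetric regularity, is the key idea your sketch is missing.
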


On the way to our goal, it is useful to have a handy
description of the real extreme points of $\overline B_{\ell_1''}$, and, consequently, of $\overline B_{c_0^\perp}$.

\begin{lemma}
The set of real extreme points of $\overline B_{\ell_1''}$ is
$$\Ext_\R(\overline B_{\ell_1''})=\{\lambda z: z\in \overline{\{e_n\}}^{w^*}, \lambda\in\C,  |\lambda|=1\}.$$ In particular, the set of real extreme points of $\overline B_{c_0^\perp}$  is
$$\Ext_\R(\overline B_{c_0^\perp})=\{\lambda z: z\in w^*-\acc\{e_n\}, \lambda\in\C,  |\lambda|=1\}.$$   
\end{lemma}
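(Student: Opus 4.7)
The plan is to use the decomposition $\ell_1'' = \ell_1 \oplus_1 c_0^\perp$ (from $c_0$ being an $M$-ideal in $\ell_\infty$) and analyze each summand. The underlying observation on $\ell_1$-sums $E = X \oplus_1 Y$ is: if $z = u + v$ with both $u, v \ne 0$, then $y := \alpha u - \beta v$ with $\alpha\|u\| = \beta\|v\|$ and $\alpha, \beta$ small satisfies $\|z \pm y\| = \|u\| + \|v\| = \|z\|$, contradicting real extremality. So any real extreme $z \in \barB_{\ell_1''}$ sits in $\ell_1$ or in $c_0^\perp$; conversely, real extremality in either factor lifts to $\barB_{\ell_1''}$ by a similar averaging argument. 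The $\ell_1$ case is classical: $\Ext_\R(\barB_{\ell_1}) = \{\lambda e_n : |\lambda|=1,\ n\in \N\}$, which is contained in $\overline{\{e_n\}}^{w^*}$.

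The heart of the argument is the $c_0^\perp$ part. For $z \in c_0^\perp$ and $A \subseteq \N$, define $z|_A \in c_0^\perp$ by $z|_A(x) := z(\chi_A\,x)$. The disjoint-support splitting $\ell_\infty = \ell_\infty(A) \oplus_\infty \ell_\infty(A^c)$ dualizes to $\|z\| = \|z|_A\| + \|z|_{A^c}\|$. Applying the perturbation argument above to the decomposition $z = z|_A + z|_{A^c}$ forces $\|z|_A\| \in \{0,1\}$ for every $A \subseteq \N$ whenever $\|z\|=1$ and $z$ is real extreme. Hence $\mathcal U := \{A\subseteq\N : z|_A = z\}$ is an ultrafilter on $\N$, and it is free because $z \in c_0^\perp$ implies $z|_F = 0$ for every finite $F$. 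Setting $\lambda := z(\chi_\N)$, one obtains $z(\chi_A) = \lambda\,\mathbf{1}_{A\in\mathcal U}$ for every $A$; density of simple functions in $\ell_\infty$ then extends this to $z(x) = \lambda\,\lim_{n \to \mathcal U} x_n$ for all $x \in \ell_\infty$, and comparing with $\|z\|=1$ yields $|\lambda|=1$. The functional $w\colon x \mapsto \lim_{n \to \mathcal U} x_n$ is the $w^*$-limit of $(e_n)$ along the free ultrafilter $\mathcal U$, so $w \in w^*-\acc\{e_n\}$ and $z = \lambda w$.

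For the converse inclusion I would verify that each $\lambda w$ with $w \in \overline{\{e_n\}}^{w^*}$ and $|\lambda|=1$ is real extreme. If $w = e_n$, extremality in $\barB_{\ell_1}$ combined with the $\oplus_1$-identity for perturbations $y = y_1 + y_2$ gives $y = 0$. If $w \in w^*-\acc\{e_n\}\subseteq c_0^\perp$ and $w\pm y \in \barB_{\ell_1''}$, the $\oplus_1$-identity first forces the $\ell_1$-part of $y$ to vanish; then $\langle w,\chi_A\rangle \in \{0,1\}$ combined with the elementary observation ``$|1\pm a|\le 1 \Rightarrow a=0$'' (plus a second ultrafilter-split argument to handle the case $A \notin \mathcal U_w$ using disjoint $A, B$ not in $\mathcal U_w$) forces $\langle y,\chi_A\rangle = 0$ for every $A$, so $y=0$. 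The ``in particular'' statement is exactly the $u=0$ subcase of the whole argument.

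The step I expect to be most subtle is the norm identity $\|z\| = \|z|_A\| + \|z|_{A^c}\|$, which requires choosing phases of approximating functions carefully so that the supremum on each piece is attained simultaneously. An alternative bypassing the entire middle paragraph is to invoke the Gelfand representation $\ell_\infty \cong C(\beta\N)$ to identify $c_0^\perp$ with $M(\beta\N \setminus \N)$, and then cite the classical theorem that $\Ext_\R(\barB_{M(K)}) = \{\lambda\delta_p : p\in K,\ |\lambda|=1\}$ for any compact Hausdorff $K$; the bijection $w^*-\acc\{e_n\} \leftrightarrow \beta\N\setminus\N$ then immediately yields the lemma.
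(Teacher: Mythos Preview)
Your argument is correct, and your final ``alternative'' paragraph is exactly the paper's proof: it invokes the Gelfand identification $\ell_\infty \cong C(\beta\N)$, so that $\ell_1'' = C(\beta\N)'$, reads off $\Ext_\R(\barB_{C(K)'}) = \{\lambda\,\xi_s : s\in K,\ |\lambda|=1\}$ from the classical description of extreme measures, and then identifies the point evaluations $\xi_s$ with the $w^*$-limits of $(e_n)$ along nets in $\N$ converging to $s\in\beta\N$. The whole thing takes the paper five lines.

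Your main route is genuinely different and more elementary: instead of importing the $C(K)'$ theory, you manufacture the free ultrafilter $\mathcal U$ directly from extremality via the restriction maps $z\mapsto z|_A$ and the splitting $\|z\|=\|z|_A\|+\|z|_{A^c}\|$, in effect rediscovering by hand the point of $\beta\N\setminus\N$ that $z$ corresponds to. This buys self-containment at the price of length. Two small comments: the norm identity you flag as delicate is immediate from the abstract duality $(X\oplus_\infty Y)' = X'\oplus_1 Y'$, no phase-matching required; and in the converse, for $A\notin\mathcal U_w$ you do not need a second split argument --- simply use $y(\chi_A) = y(\chi_\N) - y(\chi_{A^c}) = 0 - 0$, since $\N,\,A^c\in\mathcal U_w$ have already been handled.
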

\begin{proof}
  The identification $\ell_1''=\ell_\infty'=C(\beta\N)'$ gives us a description of the real extreme points of its unit ball: $\Ext_\R(\overline B_{\ell_1''})=\{\lambda \xi_s: s\in\beta\N, \lambda\in\C,  |\lambda|=1\}$, where $\xi_s(a)=a(s)$ for every $a\in\ell_\infty=C(\beta\N)$. Notice that if $s=n\in\N$, then $\xi_n=e_n\in\ell_1$. Now, if $s\in\beta\N\setminus\N$, there is a net $(n_\alpha)\subset\N$ converging to $s$ in the topology of $\beta\N$. This is equivalent to say that the net $\{e_{n_\alpha}\}\subset \ell_1''$ is $w^*$-convergent to $\xi_s$. In addition, in this case, $\xi_s$ belongs to $c_0^\perp$. As a byproduct of this discussion along with the identity $\Ext_\R(\overline B_{\ell_1''})=\Ext_\R(\overline B_{\ell})\cup \Ext_\R(\overline B_{c_0^\perp})$, the second statement follows.
\end{proof}

Now we are ready to extend the referred argument from \cite[Prop.~3.3]{aron2016cluster} to the fibers over points of $S_{\ell_1''}$ whose component in $c_0^\perp$ is a real extreme point of the ball of its norm.

\begin{proposition}\label{prop:real extreme point}
    If $z=x+w\in S_{\ell_1''}$ with $x\in\ell_1$ and $w\in c_0^\perp\setminus\{0\}$ such that $\frac{w}{\|w\|}$ is a real extreme point of $\overline B_{c_0^\perp}$ then, the fiber $\M_z$ of $\M(\A_u(B_{\ell_1}))$ satisfies $\Card(\M_z)\ge \mathfrak{c}$.
\end{proposition}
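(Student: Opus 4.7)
My plan is to combine the preceding lemma with the $2$-homogeneous polynomial $P(u) = \sum_j u_j^2 \in \A_u(B_{\ell_1})$ and build, for each $a \in [0,\tfrac12]$, a net $(y_A^a)_A \subset \barB_{\ell_1}$ whose evaluation homomorphisms admit a $w^*$-cluster point $\varphi_a \in \M_z$ in such a way that the value $\varphi_a(P)$ depends injectively on $a$; this will yield $\mathfrak c$ distinct elements of $\M_z$.

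First, I would write $r = \|w\| > 0$ and apply the preceding lemma: since $w \ne 0$ lies in $c_0^\perp$ and $c_0^\perp \cap \ell_1 = \{0\}$, we have $w/\|w\| = \lambda \xi_s$ for some $|\lambda|=1$ and some $s \in \beta\N \setminus \N$, so $z = x + \lambda r\, \xi_s$ with $\|x\|_1 + r = 1$. Since $s$ corresponds to a free ultrafilter $\mathcal V_s$ on $\N$, every $A \in \mathcal V_s$ is infinite, so I may pick two distinct integers $n_A^{(1)} \ne n_A^{(2)}$ in $A$. Indexing by $\mathcal V_s$ under reverse inclusion produces two nets, both satisfying $e_{n_A^{(i)}} \to \xi_s$ in the $w^*$-topology of $\ell_1''$.

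Next, for each $a \in [0,1]$ I would set
\[
y_A^a \;:=\; x^A + \lambda r\, a\, e_{n_A^{(1)}} + \lambda r\,(1-a)\, e_{n_A^{(2)}},
\]
where $x^A$ is $x$ with its $n_A^{(1)}$-th and $n_A^{(2)}$-th coordinates replaced by zero. The three summands have disjoint supports, giving $\|y_A^a\|_1 = 1 - |x_{n_A^{(1)}}| - |x_{n_A^{(2)}}| \le 1$. Since $x \in \ell_1$, the set $\{n : |x_n|<\varepsilon\}$ is cofinite and hence in $\mathcal V_s$ for every $\varepsilon>0$; thus $x_{n_A^{(i)}} \to 0$, and for every $\alpha \in \ell_\infty$ one has $\alpha_{n_A^{(i)}} \to \xi_s(\alpha)$. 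A short calculation gives $\langle \alpha, y_A^a \rangle \to \langle \alpha, z \rangle$, so $y_A^a \to z$ in $w^*$; and the disjointness of supports yields
\[
P(y_A^a) = \sum_j x_j^2 - x_{n_A^{(1)}}^2 - x_{n_A^{(2)}}^2 + \lambda^2 r^2 \bigl(a^2 + (1-a)^2\bigr) \longrightarrow \sum_j x_j^2 + \lambda^2 r^2 \bigl(a^2 + (1-a)^2\bigr).
\]

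Finally, compactness of $\M(\A_u(B_{\ell_1}))$ provides a $w^*$-cluster point $\varphi_a$ of $(\delta_{y_A^a})_A$. Since $\pi$ is $w^*$-continuous, $\pi(\varphi_a) = z$, so $\varphi_a \in \M_z$; and since the scalar net $P(y_A^a)$ converges to a single value, every cluster value equals that value, giving $\varphi_a(P) = \sum_j x_j^2 + \lambda^2 r^2 (a^2 + (1-a)^2)$. Because $a \mapsto a^2 + (1-a)^2$ is strictly monotone on $[0,\tfrac12]$ and $\lambda^2 r^2 \ne 0$, the map $a \mapsto \varphi_a(P)$ is injective on $[0,\tfrac12]$, so $\{\varphi_a : a \in [0,\tfrac12]\}$ is a subset of $\M_z$ of cardinality $\mathfrak{c}$. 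I expect the main subtlety to lie precisely in producing two \emph{distinct-valued} nets in $\N$ converging to the same $s \in \beta\N$: this is what decouples the weights $a$ and $1-a$ and makes $\varphi_a(P)$ genuinely depend on $a$, and it is settled by the ultrafilter-indexed construction above.
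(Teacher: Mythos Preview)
Your argument is correct, and it takes a genuinely different route from the paper's. The paper produces a \emph{single} homomorphism $\varphi\ne\delta_z$ in $\M_z$ by pushing all the mass $\|w\|$ onto one coordinate $e_{n_\alpha}$ and evaluating the polynomial $Q(y)=\sum_j(y_j-x_j)^2$; it then obtains cardinality $\mathfrak c$ indirectly, by invoking the connectedness of the cluster set of $Q$ at $z$ as in \cite[Prop.~3.3]{aron2016cluster}. You instead split the mass $r=\|w\|$ between \emph{two} coordinates $n_A^{(1)}\ne n_A^{(2)}$ with weights $a$ and $1-a$, and exploit that $P(u)=\sum_j u_j^2$ is genuinely quadratic in these weights, so the limit value $\sum_j x_j^2+\lambda^2 r^2\bigl(a^2+(1-a)^2\bigr)$ separates the parameters $a\in[0,\tfrac12]$. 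The key technical point---that a free ultrafilter has only infinite members, so one can choose two distinct indices in each $A\in\mathcal V_s$ and still have both nets converge to $\xi_s$---is handled cleanly. Your approach is longer to set up but is self-contained: it avoids the appeal to the external cluster-set connectedness result, at the cost of the explicit ``two-index'' construction. Both approaches ultimately rest on the same polynomial (up to translation by $x$) detecting how the $c_0^\perp$-component is approximated from within $\ell_1$.
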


\begin{proof}
    It is clear that, for every $z\in S_{\ell_1''}$ and every $\lambda\in\C$ with $|\lambda|=1$, $\M_z$, the fiber over $z$, is homeomorphic (and  Gleason isometric) to $\M_{\lambda z}$, the fiber over $\lambda z$. Hence, in this proof it is enough to consider $z=x+w\in S_{\ell_1''}$ with $x\in\ell_1$ and $w\in c_0^\perp$ such that $\frac{w}{\|w\|}\in w^*-\acc\{e_n\}$. Choose a net ${\{e_{n_\alpha}\}}$ satisfying $ e_{n_\alpha}\overset{w^*}{\to}\frac{w}{\|w\|}$ and take $\varphi$ to be a $w^*$-limit point of $\{\delta_{x+\|w\|e_{n_\alpha}}\}\subset \M(\A_u(B_{\ell_1}))$. 
    Note that $\varphi\in\M_z$ and let us now see that $\delta_z\not=\varphi$. Indeed, consider the polynomial $P$ given by $P(y)=\sum_{j=1}^\infty (y_j-x_j)^2$. On the one hand, since $P(x+\|w\|e_{n_\alpha})=\|w\|^2$, for every $\alpha$, we have $\varphi(P)=\|w\|^2$. On the other hand, developing the polynomial $P$ in its homogeneous parts and arguing as in the proof of  \cite[Prop.~2.5]{aron2016cluster} we derive that $\delta_z(P)=0$.
    Finally, to derive $\Card(\M_z)\ge \mathfrak{c}$ we proceed as in the end of the proof of \cite[Prop.~3.3]{aron2016cluster}, which appeals to the connection of the   {\it cluster set} of $P$ at $z\in
\barB_{\ell_1''}$ (i.e. the set consisting of all limits of values of $P$ along nets in
$B_{\ell_1}$ $w^*$-converging  to $z$).
    \end{proof}

Our next step is to show that the conclusion of the above proposition remains valid for points $z=x+w\in S_{\ell_1''}$ with $x\in\ell_1$ and $w\in c_0^\perp$ such that $\frac{w}{\|w\|}$ is not a real extreme point of $\overline B_{c_0^\perp}$. This would complete the description of the fibers over points in $ S_{\ell_1''}\setminus S_{\ell_1}$ thus finishing the proof of Theorem \ref{thm:fibras de ele 1}.

Before going on, we recall some concepts and known results about a relative algebra of analytic mappings. Given $X$ a complex Banach space, $\H_b(X)$ denotes the Fr\'echet algebra of entire functions bounded on bounded subsets of $X$, endowed with the topology of uniform convergence on these bounded sets. Several properties of the maximal ideal space $\M(\H_b(X))$ are studied in \cite{aron2016cluster} where,  following \cite{aron1991spectra}, the \textit{radius function} of $\varphi \in \M(\H_b(X))$
is defined by
$$
R(\varphi)=\inf\big\{r>0\,\colon \, |\varphi(f)|\le \|f\|_{rB_X} \text{ for every } f \in \H_b(X)\,\big\}
$$ where $\|f\|_{rB_X}=\sup\{|f(x)|\colon \|x\|<r\}$. In particular, $\|\varphi|_{X'}\| \le R(\varphi)$ \cite[Lem.~3.2]{aron1991spectra} and $R(\delta_z)=\|z\|$ for all $z\in\barB_{X''}$ \cite[Lem.~3.1]{aron1991spectra}.

The spectra of $\H_b(X)$ and $\A_u(B_X)$ are related via the following onto homeomorphism, see  
\cite[Lem.~1.2]{aron2016cluster} or \cite[Thm.~12.1]{aron1991spectra}:  
\begin{equation}\label{homeomorfismo}
\Phi\colon \M(\A_u(B_X))\to \big\{\varphi\in \mathcal{M}(\H_b(X)),\, R(\varphi)\leq 1\big\},  
\end{equation}
defined by $\Phi(\varphi)(f)=\varphi(f|_{B_X}),$ for  
$\varphi\in \M(\A_u(B_X))$ and $f\in \H_b(X)$. This identification will allow us to transfer results from one to another spectrum and will be used without further mention. 

Another important tool within the spectrum of $\H_b(X)$ (that we will make use of) is the convolution product $\ast\colon \mathcal{M}(\H_b(X))\times \mathcal{M}(\H_b(X))\to \mathcal{M}(\H_b(X))$, defined in \cite[Sect. 6]{aron1991spectra} as follows: for $\varphi, \psi\in \M(\H_b(X))$ and $f\in\H_b(X)$,
$$
\varphi\ast\psi(f)=\varphi(x\mapsto \psi(\tau_x(f))),
$$ where for each $x\in X$, $\tau_x(f)\in\H_b(X)$ is given by $\tau_x(f)(y)=f(x+y)$ for all $y \in X$.

This operation satisfies  two relevant properties:
\begin{equation}\label{radioacotacion}
R(\varphi\ast \psi)\leq R(\varphi)+R(\psi),   
\end{equation}
and given $z,w\in X''$ such that  $\varphi\in \mathcal{M}_z(\mathcal {H}_b(B_X))$ and $\psi\in \mathcal{M}_w(\H_b(B_X))$, then
\begin{equation}\label{sumafibras}
\varphi\ast \psi \in \mathcal{M}_{z+w}(\H_b(B_X)).
\end{equation}
(See \cite[Lem.~6.2]{aron1991spectra} and \cite[Lem.~6.4]{aron1991spectra} respectively.)

Finally, we will appeal to a simple composition action, 
coming from a family of one-parameter operators, $(t\Id)_t$, where $\Id$ is the identity map and $t\in \C$, see \cite[Sect.~5]{aron1991spectra}.
For each $t\in \C$,   $\Upsilon_t\colon \H^*_b(X) \to \H^*_b(X)$, is defined by 
$\Upsilon_t(\varphi)(f)= \varphi(f\circ t\Id)$. For simplicity, we denote $\varphi^t:=\Upsilon_t(\varphi)\in \H^*_b(X)$. Notice that, $\varphi^0 =\delta_0$ and $\varphi^1 = \varphi$, also, for any $m$-homogeneous polynomial $P$, $\varphi^t(P)=t^mP$. 

With the aid of these tools, we show that any nontrivial fiber in $\M(\A_u(B_X))$ has cardinality at least $\mathfrak{c}$.

\begin{lemma}\label{lem:card phi-t}  Let $X$ be a complex Banach space. For any $\varphi, \psi\in \M(\H_b(X))$, $\varphi\ne \psi$, 
$$
\Card\left(\{\varphi^t \ast \psi^{1-t}\colon t\in [0,1]\}\right) \ge \mathfrak{c}.
$$
\end{lemma}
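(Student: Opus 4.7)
The plan is to track the family $t \mapsto \varphi^{t}\ast\psi^{1-t}$ through the value it assigns to a single test polynomial. At the endpoints, observe that $\varphi^{0}=\psi^{0}=\delta_{0}$ and that $\delta_{0}$ is a neutral element for the convolution, directly from its definition; hence $\varphi^{0}\ast \psi^{1}=\psi$ and $\varphi^{1}\ast \psi^{0}=\varphi$. Because polynomials are dense in $\H_{b}(X)$ and both $\varphi$ and $\psi$ are continuous, the hypothesis $\varphi\neq \psi$ forces some homogeneous polynomial to witness the disagreement: expanding any $f$ with $\varphi(f)\neq \psi(f)$ into its Taylor series yields an $m$-homogeneous polynomial $P$ with $\varphi(P)\neq \psi(P)$.

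Fixing such a $P$ and its associated symmetric $m$-linear form $\check P$, I would compute $(\varphi^{t}\ast\psi^{1-t})(P)$ explicitly. The key algebraic step is the binomial expansion
\[
P(x+y)=\sum_{k=0}^{m}\binom{m}{k}\check P(x^{m-k},y^{k}),
\]
so that $\tau_{x}P$ decomposes into homogeneous pieces in $y$ of degrees $0,1,\dots,m$. Applying $\psi^{1-t}$ in the variable $y$ scales the $k$-homogeneous piece by $(1-t)^{k}$; the result, viewed as a function of $x$, is a polynomial whose $k$-th summand is $(m-k)$-homogeneous. Applying $\varphi^{t}$ in the variable $x$ then scales that summand by $t^{m-k}$, giving
\[
(\varphi^{t}\ast\psi^{1-t})(P)=\sum_{k=0}^{m}\binom{m}{k}\, a_{k}\, t^{m-k}(1-t)^{k},\qquad a_{k}:=\varphi\bigl(x\mapsto \psi\bigl(y\mapsto \check P(x^{m-k},y^{k})\bigr)\bigr),
\]
which is a polynomial $\pi_{P}(t)$ in $t$ of degree at most $m$, with coefficients $a_{k}$ independent of $t$.

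Combining with the endpoint calculation, $\pi_{P}(0)=\psi(P)$ and $\pi_{P}(1)=\varphi(P)$ are distinct, so $\pi_{P}$ is a non-constant polynomial. Since a non-constant complex polynomial is finite-to-one, $\pi_{P}([0,1])$ has cardinality $\mathfrak c$, which at once forces $\Card(\{\varphi^{t}\ast\psi^{1-t}:t\in[0,1]\})\ge \mathfrak c$. The main obstacle is the middle algebraic step: one must verify that $x\mapsto\psi(\tau_{x}P)$ genuinely lies in $\H_{b}(X)$ and splits cleanly into its homogeneous components, on which $\Upsilon_{t}$ then acts diagonally as prescribed. Since $P$ is a polynomial, the binomial expansion has finitely many terms and no convergence issue arises; the identity reduces to the action of $\varphi$ and $\psi$ on finitely many homogeneous polynomials together with the definition of $\Upsilon_{t}$.
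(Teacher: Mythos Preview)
Your proof is correct and follows essentially the same line as the paper: both pick an $m$-homogeneous polynomial $P$ separating $\varphi$ from $\psi$, observe that $t\mapsto(\varphi^{t}\ast\psi^{1-t})(P)$ is a polynomial in $t$ of degree at most $m$ with different values at the endpoints $t=0,1$, and conclude. The only difference is that you spell out, via the binomial expansion of $P(x+y)$ and the diagonal action of $\Upsilon_t$ on homogeneous pieces, exactly why this function is a polynomial in $t$, whereas the paper simply asserts it.
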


\begin{proof} Since $\varphi\ne \psi$ and the analytic polynomials are dense in $\H_b(X)$, there exists  a continuous $m$-homogeneous  polynomial $P$ such that 
$\varphi(P)\ne \psi(P)$.
Now, define $q\colon \C\to \C$, by $q(t)=\varphi^t\ast \psi^{1-t}(P)$, which results a polynomial of degree non greater than $m$.  
The conclusion follows by noting that $q(0)=\varphi^0\ast \psi^{1}(P)=\delta_0\ast \psi(P)= \psi(P)\ne q(1)=\varphi \ast \delta_0(P)=\varphi(P)$.
\end{proof}

\begin{lemma}\label{lem:card Mz +c}  Let $X$ be a complex Banach space and $z\in \barB_{X''}$. If there exist $\varphi\not=\psi\in\M_z$ in $\M(\A_u(B_X))$, then 
$\Card\left(\M_z\right) \ge \mathfrak{c}.$
\end{lemma}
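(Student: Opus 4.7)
The strategy is to pass to $\M(\H_b(X))$ via the homeomorphism $\Phi$ of \eqref{homeomorfismo} and build a continuum of distinct elements of $\M_z$ by interpolating $\varphi$ and $\psi$ through convolutions of their dilates. Identify $\varphi$ and $\psi$ with the corresponding distinct elements of $\{\phi\in\M(\H_b(X))\colon R(\phi)\le 1\}$, which both sit in the $\H_b$-fiber over $z$, and consider the family
\[
\{\varphi^{t}\ast\psi^{1-t}\colon t\in[0,1]\}\subset\M(\H_b(X)).
\]

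The first task is to check that each member of this family lies in $\M_z(\H_b(X))\cap\{\phi\colon R(\phi)\le 1\}$, so that it corresponds via $\Phi^{-1}$ to an element of $\M_z\subset\M(\A_u(B_X))$. For the fiber, note that for any $x'\in X'$ one has $\varphi^{t}(x')=\varphi(x'\circ t\Id)=t\varphi(x')=tz(x')$, and analogously $\psi^{1-t}$ lies over $(1-t)z$; property \eqref{sumafibras} then places $\varphi^{t}\ast\psi^{1-t}$ in the fiber over $tz+(1-t)z=z$. For the radius, a change of variables $y=tx$ inside the supremum defining $\|\cdot\|_{rB_X}$ gives the bound $R(\phi^t)\le |t|R(\phi)$ for any $\phi\in\M(\H_b(X))$; combined with \eqref{radioacotacion} this yields
\[
R(\varphi^{t}\ast\psi^{1-t})\le R(\varphi^{t})+R(\psi^{1-t})\le t+(1-t)=1,
\]
as desired.

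Finally, since $\varphi\neq\psi$, Lemma~\ref{lem:card phi-t} applies directly and gives $\Card(\{\varphi^{t}\ast\psi^{1-t}\colon t\in[0,1]\})\ge\mathfrak{c}$. Pulling this family back through $\Phi$ produces at least $\mathfrak{c}$ distinct elements inside $\M_z$, proving the claim. The only (mild) technical obstacle is the verification of the dilation bound $R(\phi^t)\le |t|R(\phi)$; everything else is an immediate application of the properties of the convolution $\ast$ and the one-parameter operators $\Upsilon_t$ already collected above.
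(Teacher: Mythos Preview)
Your proof is correct and follows essentially the same route as the paper: pass to $\M(\H_b(X))$ via $\Phi$, form the family $\{\varphi^t\ast\psi^{1-t}:t\in[0,1]\}$, check it sits in the $\H_b$-fiber over $z$ with radius $\le 1$, and invoke Lemma~\ref{lem:card phi-t}. The only cosmetic difference is that the paper cites \cite[Lem.~5.4]{aron1991spectra} for the facts $\varphi^t\in\M_{tz}$ and $R(\varphi^t)=tR(\varphi)$, whereas you verify (the inequality versions of) these directly.
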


\begin{proof} By the identification given in \eqref{homeomorfismo}, both homomorphisms $\varphi, \psi \in \M(\A_u(B_X))$ may be regarded as elements in $\M(\H_b(X))$ with $R(\varphi), R(\psi)\le 1$. 

Let us consider the set $\mathfrak{C}=\{\varphi^t \ast \psi^{1-t}\colon t\in [0,1]\}$ in $\M(\H_b(X))$. By~\cite[Lem.~5.4]{aron1991spectra} $\varphi^t \in \M_{tz}(\H_b(X))$ and $\psi^{1-t}\in \M_{(1-t)z}(\H_b(X))$. Now, \eqref{sumafibras} gives that $\varphi^t \ast \psi^{1-t}\in \M_{z}(\H_b(X))$ for all $0\le t\le 1$. Again by \cite[Lem.~5.4]{aron1991spectra}, $R(\varphi^t)= tR(\varphi)$ and  $R(\psi^{1-t})= (1-t)R(\psi)$, and an application of \eqref{radioacotacion}  yields that $R(\varphi^t \ast \psi^{1-t})\le 1$.

Finally, appealing to \eqref{homeomorfismo}, we see that $\mathfrak{C}\subset \M_z$ in $\M(\A_u(B_X))$ and the result follows by Lemma~\ref{lem:card phi-t}.
\end{proof}

Now we are ready to present the announced result which completes the proof of Theorem~\ref{thm:fibras de ele 1}.

\begin{proposition}\label{prop:fibers_thick}
    Let $z=x+w\in S_{\ell_1''}$ with $x\in\ell_1$ and $w\in c_0^\perp\setminus\{0\}$ such that $\frac{w}{\|w\|}$ is not a real extreme point of $\overline B_{c_0^\perp}$. Then, for  $\A_u(B_{\ell_1})$, the fiber $\M_z$ satisfies $\Card(\M_z)\ge \mathfrak{c}$.
\end{proposition}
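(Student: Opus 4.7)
The plan is to apply Lemma~\ref{lem:card Mz +c}: once I produce a single $\varphi\in\M_z(\A_u(B_{\ell_1}))$ with $\varphi\ne\delta_z$, the desired inequality $\Card(\M_z)\ge\mathfrak{c}$ follows automatically.  To manufacture such a $\varphi$, I would exploit the non-extreme hypothesis.  First I choose $u\in c_0^\perp\setminus\{0\}$ with $\|w/\|w\|+tu\|\le 1$ for every real $t\in[-1,1]$, scale $u':=\|w\|\,u$, and set $w_1:=w+u'$, $w_2:=w-u'\in c_0^\perp$, so that $\|w_i\|\le\|w\|$, $w_1\ne w_2$, and $w=(w_1+w_2)/2$.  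Because $\ell_1''=\ell_1\oplus_1 c_0^\perp$, the points $z_i:=x+w_i$ satisfy $\|z_i\|=\|x\|+\|w_i\|\le 1$, $z_1\ne z_2$, and $z=(z_1+z_2)/2$.

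The core construction is to find $\psi\in\M_{z_1}(\A_u(B_{\ell_1}))$ with $\psi\ne\delta_{z_1}$, transfer it to half scale through the composition operator by setting $\widetilde\psi:=\Upsilon_{1/2}(\psi)$, and then convolve with the point evaluation $\delta_{z_2/2}$.  The properties of $\Upsilon_t$ recalled just before Lemma~\ref{lem:card phi-t} yield $\pi(\widetilde\psi)=z_1/2$, $R(\widetilde\psi)=R(\psi)/2\le 1/2$, and $\widetilde\psi\ne\delta_{z_1/2}$ (by injectivity of $\Upsilon_{1/2}$).  Defining
\[
\varphi:=\widetilde\psi*\delta_{z_2/2},
\]
property~\eqref{sumafibras} gives $\pi(\varphi)=z_1/2+z_2/2=z$, inequality~\eqref{radioacotacion} gives $R(\varphi)\le R(\widetilde\psi)+\|z_2/2\|\le 1$, and via the identification~\eqref{homeomorfismo} we obtain $\varphi\in\M_z(\A_u(B_{\ell_1}))$.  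The identity $\widetilde\psi*\delta_{z_2/2}(f)=\widetilde\psi(\tau_{z_2/2}f)$ combined with translation invariance of $\H_b(\ell_1)$ makes right-convolution with $\delta_{z_2/2}$ injective on $\M(\H_b(\ell_1))$, so $\varphi\ne\delta_{z_1/2}*\delta_{z_2/2}=\delta_z$, completing the reduction.

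The whole argument therefore hinges on producing the ingredient $\psi\ne\delta_{z_1}$ in $\M_{z_1}(\A_u(B_{\ell_1}))$, and this is where I expect the main obstacle to lie.  If $u$ can be chosen so that $\|w+u'\|<\|w\|$, then $\|z_1\|<1$ and $z_1$ lies in the open ball $B_{\ell_1''}$, so Proposition~\ref{prop:ell1-fibers}(c) immediately provides $\psi$ with, in fact, a set of $2^\mathfrak{c}$ choices.  The genuinely delicate case is when every admissible direction is norm-preserving, i.e.\ $\|\widehat w+tu\|=1$ for all $t\in[-1,1]$, so that both $z_1,z_2\in S_{\ell_1''}$; there Proposition~\ref{prop:ell1-fibers}(c) no longer applies to $z_1$.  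I would then check whether the normalized $c_0^\perp$-part $w_1/\|w_1\|$ happens to be a real extreme point of $\overline B_{c_0^\perp}$, in which case Proposition~\ref{prop:real extreme point} applied to $z_1$ supplies $\psi$ and the construction above proceeds verbatim.  If neither endpoint yields a real-extreme direction, as occurs when $\widehat w$ sits in the interior of a flat face of $\overline B_{c_0^\perp}$ (for instance, diffuse positive Banach-limit-type functionals), one must supplement the convolution step with a direct cluster-homomorphism construction using approximating nets in $B_{\ell_1}$ designed to realize a specific decomposition $\widehat w=(a+b)/2$ with $a\ne b$; producing a polynomial that actually separates such a cluster from $\delta_z$ on the degenerate face is precisely the ``new scheme'' announced in the introduction to this section, and is the hardest technical point to settle.
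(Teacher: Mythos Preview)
Your argument is incomplete and, in the residual case you single out, circular: when $\|w_1\|=\|w\|$ and $w_1/\|w_1\|$ is again not a real extreme point of $\overline B_{c_0^\perp}$, the point $z_1$ satisfies exactly the hypothesis of the proposition you are trying to prove, so requiring a nontrivial element of $\M_{z_1}$ is no reduction at all---there is no termination mechanism. There is also a concrete error in the reduction step itself: the identity $\delta_{z_1/2}*\delta_{z_2/2}=\delta_z$ that you invoke at the end is false in general. Since $\ell_1$ is not symmetrically regular, convolution on $\M(\H_b(\ell_1))$ is not commutative, and by \cite[Thm.~6.11]{aron1991spectra} one has $\delta_a*\delta_b=\delta_{a+b}$ if and only if $\delta_a$ and $\delta_b$ commute. (For the same reason, your ``translation invariance'' justification of the injectivity of right convolution by $\delta_{z_2/2}$ is suspect once $z_2/2\notin\ell_1$.)

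The paper's proof turns precisely this failure into the whole argument. From the non-extreme hypothesis one writes $w=\frac{w_1+w_2}{2}$ with $\|w_1\|=\|w_2\|=\|w\|$ and $w_1\ne w_2$; a one-line computation shows $w_2$ cannot be a scalar multiple of $w_1$. Setting $z_1=x+\tfrac{w_1}{2}$ and $z_2=\tfrac{w_2}{2}$ gives $z=z_1+z_2$ with $\|z_1\|+\|z_2\|=1$. Then \cite[Cor.~2.5]{carando2023homomorphisms} yields $\delta_{z_1}*\delta_{z_2}\ne\delta_{z_2}*\delta_{z_1}$ and both differ from $\delta_z$; all three lie in $\M_z$ with radius at most $1$, and Lemma~\ref{lem:card Mz +c} finishes at once. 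No auxiliary $\psi\in\M_{z_1}$, no appeal to neighbouring fibers, and no case analysis is needed.
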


\begin{proof} 
Since $w$ is not a real extreme point of $\|w\|\overline B_{c_0^\perp}$, there exist $w_1,w_2\in c_0^\perp$ with $\|w\|=\|w_1\|=\|w_2\|$ and $w_1\neq w_2$ such that $w=\frac{w_1+w_2}{2}$.
Note that $w_2$ cannot be a multiple of 
$w_1$. Indeed, if  $w_2=\lambda w_1$, then $\|w\|=\left\|\frac{w_1+\lambda w_1}{2}\right\|=\left| \frac{1+\lambda}{2}\right|\|w\|$.
Hence $\lambda=1$, which is a contradiction.  

Now we take $z_1=x+ \frac{w_1}{2}$ and $z_2=\frac{w_2}{2}$. Since $\ell_1''=\ell_1\oplus_1 c_0^\perp$, we have $\|z_1\|=\|x\|+\frac{\|w\|}{2}$ and $\|z_2\|=\frac{\|w\|}{2}$.
This, clearly implies that $z=z_1+z_2$ with $1=\|z\|=\|x\|+\|w\|=\|z_1\|+\|z_2\|$.

We define the homomorphisms  $\varphi=\delta_{z_1}\ast  \delta_{z_2}$ and $\psi=\delta_{z_2}\ast  \delta_{z_1}$ and make use of \cite[Thm.~6.11]{aron1991spectra} where it was proved, in a general context, that $\delta_{z_1}\ast  \delta_{z_2}=\delta_{z_2}\ast  \delta_{z_1}$ if and only $\delta_{z_1}\ast  \delta_{z_2}= \delta_{z_1+z_2}$. Also, for $X=\ell_1$ something else can be said appealing to \cite[Cor.~2.5]{carando2023homomorphisms}:  since $w_2$ is not a multiple of $w_1$ we have that $\varphi\neq  \psi $ and both homomorphisms differ from $\delta_z$. As $\varphi$ is the convolution of $\delta_{z_1}$ and $\delta_{z_2}$, by \eqref{radioacotacion} we have $R(\varphi)\leq R(\delta_{z_1}) +R(\delta_{z_1})= \|z_1\| +\|z_2\|= 1$. Analogously, $R(\psi)\leq 1$. Also, by \eqref{sumafibras}, both $\varphi$ and $\psi $ belong to $\M_z(\H_b(\ell_1))$. Hence, due to \eqref{homeomorfismo} along with Lemma~\ref{lem:card Mz +c} we obtain the conclusion. 
\end{proof}

Once we have distinguished between singleton and non-singleton fibers, we focus on how they interact with Gleason parts. Although we believe that the path is paved, our progress in this regard has been limited.

In Proposition \ref{prop:GP_complex_extrem} we have seen that if $z\in\Ext_{\C}(\overline B_{X''})$ and $\varphi\in\M_z$ then $\GP(\varphi)$ cannot intersect any other fiber. In light of this result, it will be useful to have a description of $\Ext_{\C}(\overline B_{\ell_1''})$. We thank our friend Tomás Rodríguez for drawing our attention to the following fact.

\begin{remark}
  The space $\ell_\infty'$ is isometrically isomorphic to $L_1(\mu)$, for a certain measure $\mu$. 

  \rm Indeed, this is a consequence of the duality between abstract $M$- and $L$-spaces (see \cite{lacey2012isometric} for the definitions) combined with a result by Kakutani. Since we could not find a complete reference for these arguments in the case of general complex Banach lattices, we include an exposition of its derivation from the real case. In this reasoning, we distinguish real and complex spaces in a standard way. It is clear that $\ell_\infty(\R)$ is an abstract $M$-space and thus \cite[Thm. 7, pg. 25]{lacey2012isometric} implies that $\ell_\infty(\R)'$ is an abstract $L$-space. Now, due to Kakutani's theorem \cite[Thm. 3, pg. 135]{lacey2012isometric} we know that $\ell_\infty(\R)'$ is isometrically isomorphic to $L_1(\mu, \R)$, for a certain measure $\mu$.

Now, we use several facts about  Bochnack and Taylor's complexifications from \cite{munoz1999complexifications}. As in this referred article, we denote by $\widetilde X$ the complexification of a real space $X$ and  $\|\cdot\|_B$ and $\|\cdot\|_T$ for Bochnack and Taylor's norms in the complexified space.

$$
\ell_\infty(\C)'=(\widetilde{\ell_\infty(\R)}, \|\cdot\|_T)'=(\widetilde{\ell_\infty(\R)'}, \|\cdot\|_B)\cong (\widetilde{L_1(\mu, \R)},\|\cdot\|_B) = L_1(\mu, \C).
$$
\end{remark}

Now, from \cite[Th. 4.2]{thorp1967strong}, which states that every point on the unit sphere of a $L_1(\mu)$ space is a complex extreme point, combined with Proposition \ref{prop:GP_complex_extrem} we derive the following:

\begin{corollary}
   The set of complex extreme points of the unit ball of $\ell_1''$ is $\Ext_{\C}(\overline B_{\ell_1''})= S_{\ell_1''}$. Consequently, for any $z\in S_{\ell_1''}$ and $\varphi\in\M_z$, the Gleason part $\GP(\varphi)$ cannot intersect any other fiber.
\end{corollary}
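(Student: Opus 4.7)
The plan is to assemble the corollary directly from the two ingredients that immediately precede it, with only a small verification needed in between.

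First I would establish the identity $\Ext_{\C}(\overline{B}_{\ell_1''}) = S_{\ell_1''}$. The preceding remark already provides an isometric isomorphism $\ell_1'' = \ell_\infty' \cong L_1(\mu)$ for an appropriate measure $\mu$. A classical theorem of Thorp and Whitley (Theorem 4.2 of \cite{thorp1967strong}) asserts that every norm-one element of $L_1(\mu)$ is a complex extreme point of its closed unit ball; in fact this is the sharp regularity behavior distinguishing $L_1$ from $\ell_1$ and is proved via a direct subharmonicity argument on the function $\lambda \mapsto \|f + \lambda g\|_1$. So every element of the unit sphere of $L_1(\mu)$ is complex extreme, and since the reverse inclusion $\Ext_\C(\overline B_{\ell_1''}) \subset S_{\ell_1''}$ is trivial, equality follows once the transfer through the isometric isomorphism is justified.

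The transfer is the one routine point: if $T\colon X\to Y$ is a (complex) linear isometric isomorphism then $T(\overline{B}_X)=\overline{B}_Y$, hence for any $z,u\in X$ one has $z+\lambda u\in \overline{B}_X$ for all $\lambda\in\overline{\D}$ if and only if $T(z)+\lambda T(u)\in \overline{B}_Y$ for all $\lambda\in\overline{\D}$. Consequently $T$ sends $\Ext_\C(\overline{B}_X)$ bijectively onto $\Ext_\C(\overline{B}_Y)$, and the description of complex extreme points of $B_{L_1(\mu)}$ carries over verbatim to $B_{\ell_1''}$.

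With the equality $\Ext_{\C}(\overline{B}_{\ell_1''}) = S_{\ell_1''}$ in hand, the second assertion is an immediate application of Proposition \ref{prop:GP_complex_extrem}(a): for any $z\in S_{\ell_1''}$ and any $\varphi\in \M_z$, and for any distinct $w\in S_{\ell_1''}$ and any $\psi\in \M_w$, one has $\GP(\varphi)\neq \GP(\psi)$. Combined with Remark \ref{ADLM basicos}(b), which separates Gleason parts coming from the sphere from those coming from interior points of $\overline{B}_{\ell_1''}$, we conclude that $\GP(\varphi)$ cannot intersect any fiber $\M_w$ with $w\neq z$. The only nontrivial step is invoking the Thorp--Whitley theorem correctly; everything else is bookkeeping.
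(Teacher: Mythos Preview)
Your proof is correct and follows essentially the same route as the paper: invoke the preceding remark identifying $\ell_1''$ isometrically with an $L_1(\mu)$ space, apply the Thorp--Whitley theorem \cite[Th.~4.2]{thorp1967strong} to conclude $\Ext_{\C}(\overline{B}_{\ell_1''})=S_{\ell_1''}$, and then feed this into Proposition~\ref{prop:GP_complex_extrem}. Your explicit appeal to Remark~\ref{ADLM basicos}\eqref{Rem.borde-interior separados} for the interior fibers is a welcome bit of precision, since the ``Indeed'' clause of Proposition~\ref{prop:GP_complex_extrem}(a) only spells out the case $w\in S_{X''}$.
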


The existence of homomorphisms from the same fiber belonging to different Gleason parts seems to be tougher to establish in $\M(\A_u(B_{\ell_1}))$ than in $\M(\A_u(B_{\ell_p}))$ ($1<p<\infty$). A close inspection to the proof of Proposition \ref{prop:real extreme point} yields our only result about this question:

\begin{corollary}
    If $w$ is a real extreme point of $\overline B_{c_0^\perp}$ then there is $\varphi\in\M_w$ such that $\GP(\varphi)\not=\GP(\delta_w)$.
\end{corollary}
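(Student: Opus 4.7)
The plan is to take the homomorphism $\varphi \in \M_w$ that is implicit in the proof of Proposition~\ref{prop:real extreme point} and then invoke Lemma~\ref{Lema-util} with the very same polynomial used there to separate it from $\delta_w$ in the Gleason structure. First I would reduce to the case $w \in w^*$-$\acc\{e_n\}$. By the lemma describing $\Ext_{\R}(\overline B_{c_0^\perp})$ that precedes Proposition~\ref{prop:real extreme point}, $w = \lambda z$ with $|\lambda|=1$ and $z \in w^*$-$\acc\{e_n\}$; since the unimodular map $y\mapsto \bar\lambda y$ is an automorphism of $B_{\ell_1}$ and induces a Gleason isometry of $\M(\A_u(B_{\ell_1}))$ sending $\M_w$ onto $\M_z$ and $\delta_w$ onto $\delta_z$ (as in the proof of Proposition~\ref{prop: p entero}), I may assume $\lambda=1$. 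Pick a net $(e_{n_\alpha})$ with $e_{n_\alpha}\overset{w^*}{\to} w$, and by compactness of $\M(\A_u(B_{\ell_1}))$ let $\varphi$ be a $w^*$-cluster point of $\{\delta_{e_{n_\alpha}}\}$; since the restriction map $\pi$ is $w^*$-continuous, $\varphi \in \M_w$.

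Next I would work with the $2$-homogeneous polynomial $P(y)=\sum_{j=1}^\infty y_j^2$, which lies in $\A_u(B_{\ell_1})$ and has norm $1$. The computations already carried out in the proof of Proposition~\ref{prop:real extreme point} (specialized to $x=0$) give $\varphi(P) = \lim_\alpha P(e_{n_\alpha}) = 1$ and $\delta_w(P)=\widetilde P(w) = 0$: indeed, by \cite[Prop.~2.5]{aron2016cluster} the canonical extension $\widetilde P$ at a point of $\overline B_{\ell_1''}$ depends only on its $\ell_1$-coordinates, all of which vanish for $w\in c_0^\perp$.

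Finally, I would apply Lemma~\ref{Lema-util} to the constant net $f_\beta \equiv P$, taking its $\varphi$ and $\psi$ to be $\delta_w$ and the above $\varphi$, respectively: the hypotheses $\|P\|\le 1$, $\delta_w(P)=0 \in \barD\setminus\{1\}$ and $\varphi(P)=1$ are met, so the lemma yields $\GP(\delta_w)\not=\GP(\varphi)$, which is the desired conclusion.

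There is no genuinely new obstacle here, since the hard part — producing a homomorphism in $\M_w$ that disagrees with $\delta_w$ on $P$, and verifying $\widetilde P(w)=0$ for $w\in c_0^\perp$ — is precisely what the proof of Proposition~\ref{prop:real extreme point} accomplishes. The corollary is just a repackaging of that argument in the language of Gleason parts, made possible by the observation that the same distinguishing polynomial $P$ also fits the hypotheses of Lemma~\ref{Lema-util}.
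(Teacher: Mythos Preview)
Your proposal is correct and follows essentially the same approach as the paper: both use the polynomial $P(y)=\sum_j y_j^2$ from the proof of Proposition~\ref{prop:real extreme point} (specialized to $x=0$, $\|w\|=1$) to get $\varphi(P)=1$ and $\delta_w(P)=0$ with $\|P\|=1$, which immediately separates the Gleason parts. The paper leaves the final step implicit (it follows directly from $\rho(\varphi,\delta_w)\ge|\varphi(P)|=1$), whereas you route it through Lemma~\ref{Lema-util} with a constant net; both are fine.
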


Indeed, as in the referred proof (which in this case is not other than the proof of \cite[Prop.~3.3]{aron2016cluster}),  the polynomial $P(y)=\sum_{j=1}^\infty y_j^2$ has norm 1 and $\varphi(P)=1$ while $\delta_w(P)=0$.

\section{Final comments and open questions}
\label{Sec:final}

We devote this last section to listing some open problems arising from the work and to presenting some related comments or examples.

For any element $f$ in a uniform algebra $\A$ there  exists a strong boundary point $\varphi\in\M(\A)$ such that $\varphi(f)=1$; see for instance \cite[Thm.~7.21]{stout1971theory}. Consequently, it would be interesting to obtain a description of all the strong boundary points for the algebras $\A=\A_u(B_{\ell_p})$,  $1\le p<\infty$, that we have studied. In the case $1< p<\infty$, as we see in Proposition \ref{Prop-fibers-GPs1}, $\delta_z$ is a peak point and, hence, a strong boundary point for $\A$, for every $z\in S_{\ell_p}$. Also, in \cite[Prop.~3.6]{aron2020gleason} it is proved that all the strong boundary points for $\H^\infty(B_{c_0})$ are in  fibers over points of 
the distinguished boundary $\T^\N$ of $S_{\ell_\infty}$. So, one can be tempted to think that all the strong boundary points for $\A$ should be in fibers over \textit{boundary} points; that is, in fibers over points in $S_{\ell_p}$ (for $1< p<\infty$) or $S_{\ell_1''}$ (for $p=1$). The following example from \cite{aron1991spectra} shows that this is not the case, at least for $1<p<\infty$.

\begin{example} \cite[Lem.~11.2]{aron1991spectra}
Let $k\in\N$, $k\ge p>1$ and consider $f\colon\ell_p\to\C$ given by $f(x)=\sum_n (1-\frac1n)x_n^k$. It is clear that $f\in\A_u(B_{\ell_p})$ with $\|f\|=1$. Take a strong boundary point $\varphi\in\M(\A)$ such that $\varphi(f)=1$ which, by the argument from the referred lemma, belongs to $\M_0$.
    \end{example}

In \cite[Prop.~1.6]{aron2020gleason} it is shown that if we have an automorphism $\Phi:B_X\to B_X$ such that $\Phi$ and $\Phi^{-1}$ are uniformly continuous, then the associated mapping $\Lambda_\Phi\colon\M(\A_u(B_X))\to \M(\A_u(B_X))$, given by $\Lambda_\Phi(\varphi)(f)=\varphi(f\circ\Phi)$, is an onto isometry. In this case, it is easy to see that $\varphi$ is a strong boundary point if and only if $\Lambda_\Phi(\varphi)$ is a strong boundary point.  Therefore, the previous example along with what we have commented in Remark~\ref{rmk:automorfismo beta} allows us to conclude that all the fibers of $\M(\A_u(B_{\ell_2}))$ have strong boundary points. We do not know if the same holds for other values of $p$.

Changing our focus, what it is certainly always true is that the radius function on a strong boundary point should be 1. Indeed, if $\varphi\in\M(\A)$ is a strong boundary point, then there is a non-constant function $f\in\A$ such that $\varphi(f)=1$. Then, we cannot have $|\varphi(f)|\le \|f\|_{rB_{\ell_p}}$ for any $r<1$, meaning that $R(\varphi)=1$.

Continuing with strong boundary points, as already mentioned in Section~\ref{Sec:ele1}, a combination of \cite[Thm. 2.6]{acosta2007shilov} with \cite[Thm. 3.3 (2)]{choi2021boundaries} gives that for each $z\in S_{\ell_1}$, the evaluation $\delta_z$ is a peak point. Hence, $\delta_z$ is a strong boundary point. Also, 
given any $y=(y_n)\in S_{\ell_\infty}$ with $|y_n|<1$ for all $n$ there exists $z$, a real extreme point  of $\overline B_{\ell_1''}$, such that $\langle z,y\rangle =1$. It is evident that $z\in \Ext_{\R}(\overline B_{c_0^\perp})$. On the other hand, since $y\in\A_u(B_{\ell_1})$ there is a strong boundary point $\varphi\in\M(\A)$ such that $\varphi(y)=1$. So we can wonder if $\varphi=\delta_z$ or, at least, $\varphi\in \M_z$. 

Now we come to our first open question.

\textbf{Open problem 1.} Describe all the strong boundary points for $\A_u(B_{\ell_p})$,  ($1\le p<\infty$) or, at least, identify all the fibers containing strong boundary points.

 The existence of homomorphisms from different Gleason parts in the same fiber is almost unknown for $\M(\A_u(B_{\ell_1}))$. We have just proved it for the fiber $\M_w$, being $w$ a real extreme point of $\overline B_{c_0^\perp}$. Also, we know that $\M_z= \{\delta_z\}$ for $z\in S_{\ell_1}$. Thus, it would be interesting to solve it for the rest of the fibers.

\textbf{Open problem 2.} For $\M(\A_u(B_{\ell_1}))$, given $z\in \overline B_{\ell_1''}$ such that $z\not\in S_{\ell_1}$ and $z$ is not a real extreme point of $\overline B_{c_0^\perp}$, decide whether there exists $\varphi\in\M_z$ with $\GP(\varphi)\not=\GP(\delta_z)$.

Looking at the proof of the result referred to in Proposition~\ref{prop:ell1-fibers} (b) for the spectrum $\M(\A_u(B_{\ell_1}))$,  we derive that for each $z\in B_{\ell_1''}$ there is a copy of the complex disk contained in $\M_z\cap \GP(\delta_z)$, which, by Remark~\ref{ADLM basicos} coincides with $\M_z\cap \GP(\delta_0)$. Analogous results are stated in Proposition~\ref{Prop-fibers-GPs} for $\M(\A_u(B_{\ell_p}))$, $1<p<\infty$. Thus,  in our setting, all known cases of two homomorphisms (in fibers over interior points) belonging to the same Gleason part satisfy that the Gleason part is $\GP(\delta_0)$. This leads us to propose the next problem.

\textbf{Open problem 3.} For $\M(\A_u(B_{\ell_p}))$, $1\le p<\infty$, show the existence (or lack) of morphisms $\varphi\not= \psi$, lying in fibers over interior points, such that $\GP(\varphi)=\GP(\psi) \not= \GP(\delta_0)$.

Proposition~\ref{prop: p entero} has a restrictive unexpected hypothesis: $p$ should be integer. The reason is purely technical since we need $P(x)=\sum_n x_n^p$ to be a polynomial and, hence, an element of $\A_u(B_{\ell_p})$. It is interesting to know whether this hypothesis is necessary or not.

\textbf{Open problem 4.} For $\M(\A_u(B_{\ell_p}))$, $1< p<\infty$, if $p$ is not an integer, does the conclusion of Proposition~\ref{prop: p entero} hold?

\section{Appendix}

 In analyzing the interaction between fibers and Gleason parts within the spectrum of $\A_u(B_X)$, the set of complex extreme points of the closed unit ball of the bidual space has proven to be useful, as can be seen for example in Proposition~\ref{prop:GP_complex_extrem}. This fact motivates us, for the sake of completeness, to present in this appendix a description of the complex extreme points of the unit ball of $X \oplus_p Y$. Even though it was not necessary for our investigation, we believe that the following is of interest on its own.  

\begin{proposition}\label{prop:complex_extreme}
Given Banach spaces $X$ and $Y$ and $1\le p<\infty$ we have
$$
\Ext_{\C}(\overline B_{X\oplus_p Y})=\{x+y\in S_{X\oplus_p Y}\colon\ x\in\Ext_{\C}(\|x\|\overline B_{X})\ \ \text{\rm and }\ y\in\Ext_{\C}(\|y\|\overline B_{ Y})\}.
$$  
\end{proposition}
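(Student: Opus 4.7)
The plan is to verify both set inclusions separately.

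For the inclusion $\subseteq$, I would argue by contradiction. If $z=x+y$ is a complex extreme point of $\barB_{X\oplus_p Y}$ but $x\notin \Ext_{\C}(\|x\|\barB_X)$, then there exists $u\neq 0$ in $X$ with $\|x+\lambda u\|\le \|x\|$ for all $\lambda\in\barD$, so $\|z+\lambda u\|^p=\|x+\lambda u\|^p+\|y\|^p\le\|x\|^p+\|y\|^p=1$, contradicting the extremality of $z$. The degenerate case $x=0$ (so $\|y\|=1$) is handled symmetrically by perturbing the $Y$-coordinate, and the statement for $y$ is the mirror image.

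For the reverse inclusion $\supseteq$, take $z=x+y\in S_{X\oplus_p Y}$ with $x\in \Ext_{\C}(\|x\|\barB_X)$ and $y\in \Ext_{\C}(\|y\|\barB_Y)$, and assume $w=u+v$ satisfies $\|z+\lambda w\|\le 1$ for every $\lambda\in\barD$. Setting $f(\lambda):=\|x+\lambda u\|^p$ and $g(\lambda):=\|y+\lambda v\|^p$, this reads $f(\lambda)+g(\lambda)\le 1$. Since $\lambda\mapsto x+\lambda u$ is holomorphic and the norm is plurisubharmonic, both $f$ and $g$ are subharmonic (composition with the convex nondecreasing map $t\mapsto t^p$ preserves subharmonicity for $p\ge 1$). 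Because $f+g\le 1$ on $\barD$ with equality at the interior point $\lambda=0$, the maximum principle applied to the subharmonic $f+g$ forces $f+g\equiv 1$ on $\barD$. Hence $f=1-g$ is both subharmonic and superharmonic, and therefore harmonic on $\D$; the same conclusion holds for $g$.

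The core remaining step, and the one I expect to be the main obstacle, is to upgrade harmonicity of $f$ to constancy of $\lambda\mapsto \|x+\lambda u\|$ uniformly in $p\ge 1$. Assuming $\|x\|>0$ and $u\neq 0$, the nonnegative harmonic $f$ cannot vanish anywhere on $\D$: if it did, the minimum principle would give $f\equiv 0$, forcing $x=u=0$. Thus $\log f$ is defined on $\D$; it is superharmonic because, with $f$ harmonic, $\Delta\log f=-|\nabla f|^2/f^2\le 0$, and it is subharmonic because $\log\|x+\lambda u\|=\tfrac{1}{p}\log f$ is the logarithm of the norm of a holomorphic map into $X$. So $\log f$ is harmonic, which forces $|\nabla f|\equiv 0$, i.e., $f$ is constant. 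The marginal case $\|x\|=0$ is handled directly, since $f(\lambda)=|\lambda|^p\|u\|^p$ is subharmonic but not harmonic whenever $\|u\|>0$ (a computation in polar coordinates gives $\Delta|\lambda|^p=p^2|\lambda|^{p-2}>0$ off the origin). In every case $\|x+\lambda u\|\equiv\|x\|$ on $\barD$, so $x+\lambda u\in\|x\|\barB_X$ for every $\lambda\in\barD$, and the complex extremality of $x$ yields $u=0$; the same argument gives $v=0$, so $w=0$ and $z$ is a complex extreme point of $\barB_{X\oplus_p Y}$.
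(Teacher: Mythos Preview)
Your argument is correct. The inclusion $\subseteq$ is handled exactly as in the paper. For $\supseteq$, however, you take a genuinely different route. After writing $\|x+\lambda u\|^p+\|y+\lambda v\|^p\le 1$, the paper also applies this at $-\lambda$ and uses convexity of $t\mapsto t^p$ together with the triangle inequality to squeeze out the identity $\|x\|^p=\tfrac{1}{2}\big(\|x+\lambda u\|^p+\|x-\lambda u\|^p\big)$ for every $\lambda\in\barD$; it then invokes a separate elementary lemma (Lemma~\ref{lema simple}), proved via a norming functional and a scalar computation, to conclude $\|x+\lambda u\|\equiv\|x\|$. Your approach replaces this by potential theory: the strong maximum principle forces $f+g\equiv 1$, so $f$ is harmonic, and then the simultaneous sub- and super-harmonicity of $\log f$ (the latter from $\Delta\log f=-|\nabla f|^2/f^2$, the former from log-subharmonicity of the norm of a Banach-valued holomorphic map) forces $f$ to be constant. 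The paper's route is more elementary, avoiding subharmonic function theory entirely at the cost of the auxiliary lemma; your route is self-contained once one grants the standard fact that $\log\|F\|$ is subharmonic for Banach-valued holomorphic $F$, and it gives a conceptual reason why harmonicity of $\|x+\lambda u\|^p$ already implies constancy.
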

    
 To prove this, we make use of the next simple lemma, which is surely well-known. In its proof, included to cover all bases, we refer to the following implication valid for $a, b\in\mathbb{C}$:

\begin{equation}\label{complejos}
    |a|=\frac{|a+\lambda b|+|a-\lambda b|}{2}\ \ \text{ for all }\ \lambda \in \barD\ \Rightarrow\ b=0.
\end{equation}

Indeed, \eqref{complejos} holds since if $|a|=\frac{|a+\lambda b|+|a-\lambda b|}{2}$ then there exists $r_{\lambda}\in\R_{>0}$ such that $a+\lambda b=r_{\lambda}(a-\lambda b) $, except for at most one value of $\lambda=\lambda_0$ satisfying $a=\lambda_0 b$. Thus, $\frac{r_{\lambda}-1}{r_{\lambda}+1}a=\lambda b$. Choosing $\theta\in [0,2\pi)$ such that $a e^{i\theta}=|a|$ we obtain $\frac{r_{\lambda}-1}{r_{\lambda}+1}|a|=\lambda b e^{i\theta}$, which implies $\lambda b e^{i\theta}\in\R$ for all $\lambda\in\barD\setminus \{\lambda_0\}$. This can only be true if $b=0$.

\begin{lemma}\label{lema simple}
    Let $X$ be a Banach space, $1\le p<\infty$ and $x, u\in X$. The following are equivalent:
    \begin{enumerate}[\upshape (i)]
        \item \label{lem_extr_i} $\|x\|=\|x+\lambda u\|$ for all $\lambda \in \barD$. 
        \item \label{lem_extr_ii} $\|x\|^p=\frac{\|x+\lambda u\|^p+\|x-\lambda u\|^p}{2}$ for all $\lambda \in \barD$.
    \end{enumerate}
\end{lemma}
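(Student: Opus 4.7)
The implication $(\ref{lem_extr_i})\Rightarrow(\ref{lem_extr_ii})$ is immediate: if $\|x+\lambda u\|=\|x\|$ for every $\lambda\in\barD$, then also $\|x-\lambda u\|=\|x+(-\lambda)u\|=\|x\|$, since $-\lambda\in\barD$, and the two summands on the right-hand side of $(\ref{lem_extr_ii})$ each equal $\|x\|^p$.

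For the converse, my strategy is to combine the Hahn--Banach theorem with the scalar implication $\eqref{complejos}$. Pick $x^*\in X^*$ with $\|x^*\|=1$ and $x^*(x)=\|x\|$, and set $a:=x^*(x)$ and $b:=x^*(u)$. For each $\lambda\in\barD$, the triangle inequality in $\C$ together with the convexity of $t\mapsto t^p$ on $[0,\infty)$ (valid because $p\ge 1$) gives
\[
|a|^p\;\le\;\left(\tfrac{|a+\lambda b|+|a-\lambda b|}{2}\right)^{\!p}\;\le\;\tfrac{|a+\lambda b|^p+|a-\lambda b|^p}{2}.
\]
On the other hand, since $x^*(x\pm\lambda u)=a\pm\lambda b$ and $|x^*(y)|\le\|y\|$, the hypothesis $(\ref{lem_extr_ii})$ yields
\[
\tfrac{|a+\lambda b|^p+|a-\lambda b|^p}{2}\;\le\;\tfrac{\|x+\lambda u\|^p+\|x-\lambda u\|^p}{2}\;=\;\|x\|^p\;=\;|a|^p.
\]
Concatenating, the whole chain starts and ends at $|a|^p$, so every intermediate inequality is an equality.

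Two consequences follow from the squeeze. First, equality at the leftmost step forces $|a|=\tfrac{|a+\lambda b|+|a-\lambda b|}{2}$ for every $\lambda\in\barD$; then $\eqref{complejos}$ yields $b=x^*(u)=0$. Second, equality at the rightmost step gives $|x^*(x+\lambda u)|=\|x+\lambda u\|$, and combining with $x^*(u)=0$ one obtains
\[
\|x+\lambda u\|\;=\;|x^*(x+\lambda u)|\;=\;|a+\lambda b|\;=\;|a|\;=\;\|x\|,
\]
which is $(\ref{lem_extr_i})$.

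The main technical point is running the two inequality chains simultaneously so as to manufacture the scalar identity that feeds into $\eqref{complejos}$; after that, the conclusion is automatic. Notably, the argument is uniform in $p\ge 1$ and does not rely on strict convexity of $t\mapsto t^p$, which would fail at $p=1$.
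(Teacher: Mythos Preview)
Your proof is correct and follows essentially the same approach as the paper's: choose a norming functional $x^*$ for $x$, sandwich a chain of inequalities between $\|x\|$ and itself using (\ref{lem_extr_ii}), deduce the scalar identity needed for \eqref{complejos}, and conclude via $x^*(u)=0$ together with $|x^*(x+\lambda u)|=\|x+\lambda u\|$. The only cosmetic difference is that the paper keeps the chain at the level of first powers and invokes the power-mean inequality $\tfrac{s+t}{2}\le\bigl(\tfrac{s^p+t^p}{2}\bigr)^{1/p}$, whereas you raise to the $p$-th power first and use convexity of $t\mapsto t^p$; these are equivalent formulations.
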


\begin{proof}
  The implication \eqref{lem_extr_i} $\Rightarrow$ \eqref{lem_extr_ii} is obvious. Now, suppose that \eqref{lem_extr_ii} holds and let $x'\in S_{X'}$ such that $x'(x)=\|x\|$. Since
\begin{eqnarray*}
   \|x\| &=& x'(x)\le \frac{|x'(x+\lambda u)|+|x'(x-\lambda u)|}{2}\le\frac{\|x+\lambda u\|+\|x-\lambda u\|}{2}\\
   &\le & \left(\frac{\|x+\lambda u\|^p+\|x-\lambda u\|^p}{2}\right)^{1/p}=\|x\|  
\end{eqnarray*}
 
we obtain that $|x'(x+\lambda u)|=\|x+\lambda u\|$ for all $\lambda \in \barD$ and also
$$
x'(x)=|x'(x)|= \frac{|x'(x)+\lambda x'(u)|+|x'(x)-\lambda x'(u)|}{2}\quad\textrm{ for all }\lambda \in \barD.
$$ 
This implies, by \eqref{complejos}, that $x'(u)=0$. 

In consequence, $\|x\|=|x'(x)|=|x'(x+\lambda u)|=\|x+\lambda u\|$  for all $\lambda \in \barD$, and  \eqref{lem_extr_i} holds.
\end{proof}

\begin{proof}[Proof of Proposition~\ref{prop:complex_extreme}] 
   If $z=x+y\in  \Ext_{\C}(\overline B_{X\oplus_p Y})$ with $x\in X$ and $y\in Y$ then $1=\|z\|^p=\|x\|^p+\|y\|^p$. Suppose that $\|x+\lambda u\|\le \|x\|$ for some $u\in X$ and all $\lambda \in \barD$. Then $\|z+\lambda u\|\le 1$ for all $\lambda \in \barD$ which implies $u=0$. This proves that $x\in\Ext_{\C}(\|x\|\overline B_{X})$. Analogously, it is shown that $y\in\Ext_{\C}(\|y\|\overline B_{Y})$.

Conversely, given $x\in\Ext_{\C}(\|x\|\overline B_{X})$ and $y\in\Ext_{\C}(\|y\|\overline B_{Y})$ such that  $\|x\|^p+\|y\|^p=1$, let us see that $z=x+y$  belongs to $\Ext_{\C}(\overline B_{X\oplus_p Y})$. Suppose that $w\in X\oplus_p Y$, $w=u +v$ ($u\in X$ and $v\in Y$) satisfies $\|z+\lambda w\|\le 1$ for all $\lambda \in \barD$. Then, $\|x+\lambda u\|^p+\|y+\lambda v\|^p\le 1$ for all $\lambda \in \barD$. By a simple triangle inequality argument this leads to the following identities, valid for every $\lambda \in \barD$:
$$
\|x\|^p=\frac{\|x+\lambda u\|^p+\|x-\lambda u\|^p}{2};\qquad \|y\|^p=\frac{\|y+\lambda v\|^p+\|y-\lambda v\|^p}{2}.
$$ By Lemma \ref{lema simple}, we derive that $\|x\|=\|x+\lambda u\|$  and $\|y\|=\|y+\lambda v\|$ for all $\lambda \in \barD$ implying, by our assumption, that $u=v=0$, which finishes the proof.
\end{proof}

Observe that the relationship for real extreme points coincides with the one in Proposition~\ref{prop:complex_extreme} for $1<p<\infty$ but is different for $p=1$. Indeed,
$$\Ext_{\R}(\overline B_{X\oplus_1 Y})=\Ext_{\R}(\overline B_{X})\cup\Ext_{\R}(\overline B_{ Y}),$$ while, for $1<p<\infty$,
$$\Ext_{\R}(\overline B_{X\oplus_p Y})=\{x+y\in S_{X\oplus_p Y}\colon\ x\in\Ext_{\R}(\|x\|\overline B_{X})\ \ \text{\rm and }\ y\in\Ext_{\R}(\|y\|\overline B_{ Y})\}.$$


\noindent \textbf{Acknowledgments:} We are very grateful for the generosity of several friends and colleagues who certainly helped us through this research. First of all, we would like to thank  Richard Aron since this article grew out of discussions we had while completing \cite{aron2020gleason}. We thank him for his valuable comments, questions, and fruitful conversations. We also want to thank Tomás Rodríguez for his suggestions after a careful reading of an earlier version of the article, which led to an improvement of Section \ref{Sec:ele1}. Finally, we thank Andreas Defant and Mietek Masty{\l}o for helpful interactions. 

\bibliographystyle{abbrv}

\end{document}